\documentclass[11pt]{article}
\usepackage{amsmath}
\usepackage{amsthm}
\usepackage{latexsym}
\usepackage{graphicx, epsfig}
\usepackage{amssymb,amsmath}  
\usepackage{subfigure}
\usepackage{hyperref}
\usepackage{mathrsfs}
\usepackage{mathdots}
\usepackage{cases}

\usepackage{pgfpages,tikz,tikz-cd,pgfkeys,pgfplots}
\usetikzlibrary{arrows,positioning,matrix,fit,backgrounds,shapes}

\usetikzlibrary{fit,matrix}
\tikzset{
mN/.style = {
    draw=#1, semithick, inner sep=0pt}
             }

\oddsidemargin  0.5cm \evensidemargin 0cm \topmargin 0.5cm
\headheight 0pt \textheight 20cm \textwidth 15cm

\definecolor{LemonChiffon}{rgb}{100, 98, 80}
\definecolor{myblue}{rgb}{0,0.4,0.8}
\definecolor{orange}{rgb}{1, 0.4, 0}
\definecolor{mygreen}{rgb}{0, 0.8, 0}
\definecolor{myred}{rgb}{204, 0, 0}
\definecolor{violet}{RGB}{0.4,0.2,1}
\definecolor{brown}{rgb}{0.6, 0.4, 0}

\makeatletter
\newtheorem*{rep@XThm}{\rep@title}
\newcommand{\newreptheorem}[2]{%
\newenvironment{rep#1}[1]{%
 \def\rep@title{#2 \ref{##1}}%
 \begin{rep@XThm}}%
 {\end{rep@XThm}}}
\makeatother

\newtheorem{theorem}{Theorem}[section]

\newreptheorem{XThm}{Theorem}
\newtheorem{lemma}[theorem]{Lemma}
\newtheorem{proposition}[theorem]{Proposition}
\newtheorem{corollary}[theorem]{Corollary}

\theoremstyle{definition}
\newtheorem{definition}[theorem]{Definition}

\newtheorem{example}[theorem]{Example}

\newcounter{statement}
\makeatletter
\newcommand{\leqnomode}{\tagsleft@true\let\veqno\@@leqno}
\newcommand{\reqnomode}{\tagsleft@false\let\veqno\@@eqno}
\makeatother
\newcommand{\statement}[2]{%
   \begin{equation}\refstepcounter{statement}%
      \leqnomode%
      \tag{S\thestatement}\label{stm:#1}%
      \parbox{\dimexpr\linewidth-4em}{#2}%
   \end{equation}%
}

\title{Matrix periods and competition periods of Boolean Toeplitz matrices II}
\date{}
\author{Gi-Sang Cheon$^{a, b}$, Bumtle Kang$^{b}$, Suh-Ryung Kim$^{b, c}$, and Homoon Ryu$^{b,c}$ \\
{\footnotesize $^a$ \textit{Department of Mathematics, Sungkyunkwan
University, Suwon 16419, Rep. of Korea}}\\
{\footnotesize $^{b}$ \textit{Applied Algebra and Optimization
Research Center, Sungkyunkwan University,}}\\{\footnotesize\textit{
Suwon 16419, Rep. of Korea}}\\
 {\footnotesize $^{c}$ \textit{Department of Mathematics Education,
Seoul National University,}}\\{\footnotesize\textit{
Seoul 08826, Rep. of Korea}}\\
{\footnotesize gscheon@skku.edu, lokbt@hotmail.com, srkim@snu.ac.kr, and ryuhomun@naver.com}}

\begin{document}

\maketitle

\begin{abstract}
This paper is a follow-up to the paper [Matrix periods and competition periods of Boolean Toeplitz matrices, {\it Linear Algebra Appl.} 672:228--250, (2023)].
Given subsets $S$ and $T$ of $\{1,\ldots,n-1\}$, an $n\times n$ Toeplitz matrix $A=T_n\langle S ; T \rangle$ is defined to have $1$ as the $(i,j)$-entry if and only if $j-i \in S$ or $i-j \in T$.
In the previous paper, we have shown that the matrix period and the competition period of Toeplitz matrices $A=T_n\langle S; T \rangle$ satisfying the condition ($\star$) $\max S+\min T \le n$ and $\min S+\max T \le n$ are $d^+/d$ and $1$, respectively, where $d^+= \gcd (s+t \mid s \in S, t \in T)$ and $d = \gcd(d, \min S)$.
In this paper, we claim that even if ($\star$) is relaxed to the existence of elements $s \in S$ and $t \in T$ satisfying $s+t \le n$ and $\gcd(s,t)=1$, the same result holds.
There are infinitely many Toeplitz matrices that do not satisfy ($\star$) but the relaxed condition.
For example, for any positive integers $k, n$ with $2k+1 \le n$, it is easy to see that $T_n\langle k, n-k;k+1, n-k-1  \rangle$ does not satisfies ($\star$) but satisfies the relaxed condition.
Furthermore, we show that the limit of the matrix sequence $\{A^m(A^T)^m\}_{m=1}^\infty$ is $T_n\langle d^+,2d^+, \ldots, \lfloor n/d^+\rfloor d^+\rangle$.
\end{abstract}

\section{Introduction}
A {\em binary Boolean ring} $(\mathbb{B}, +, \cdot)$ is a set $\mathbb{B} = \{0,1\}$ with two binary operations $+$ and $\cdot$ on $\mathbb{B}$ defined by
\[
\begin{array}{c|cc}
+ & 0 & 1 \\
\hline
0 & 0 & 1 \\
1 & 1 & 1
\end{array}  \quad \text{ and }\quad
\begin{array}{c|cc}
\cdot & 0 & 1 \\
\hline
0 & 0 & 0 \\
1 & 0 & 1
\end{array}
\]

Let $\mathbb{B}_n$ be the set of $n \times n$ {\it Boolean matrices} with entries from a binary Boolean ring.
Take $A \in \mathbb{B}_n$.
The {\em matrix period} of $A$ is the
smallest positive integer $p$ for which there is a positive integer $M$ such that $A^m = A^{m+p}$ for any integer $m \ge M$.
We note that the rows $i$ and $j$ of $A^m$ have a common nonzero entry in some column if and only if the $(i,j)$-entry of $A^m(A^T)^m$ is $1$.
Consider the matrix sequence $\{A^m(A^T)^m\}_{m=1}^\infty$.
Since $|\mathbb{B}_n|= 2^{n^2}$, there is the smallest positive integer $q$ such that 
\[A^{q+i}(A^T)^{q+i}=A^{q+r+i}(A^T)^{q+r+i}\] for some positive integer $r$ and every nonnegative integer $i$.
Then there is also the smallest positive integer $p$ such that $A^{q}(A^T)^q=A^{q+p}(A^T)^{q+p}$.
Those integers $q$ and $p$ are called the {\it competition index} and {\it competition period} of $A$, respectively, which was introduced by Cho and Kim~\cite{cho2013competition}.
Refer to \cite{cho2011competition,kim2008competition,kim2010generalized, kim2015characterization, kim2012bound} for further results on competition indices and competition periods of digraphs.

A (0,1)-matrix $A=(a_{ij})\in \mathbb{B}_n$ is called a {\em Boolean Toeplitz matrix} if $a_{ij}=a_{j-i} \in \mathbb{B}= \{0,1\}$, {\it i.e.} $A$ is of the Toeplitz form:
\[
\begin{bmatrix}
a_0 & a_1 & \cdots & a_{n-1} \\
a_{-1} & a_0  & \ddots &\vdots \\
\vdots & \ddots & \ddots  & a_1  \\
a_{-n+1}&\cdots &a_{-1} & a_0
\end{bmatrix}.
\]
Accordingly, a Boolean Toeplitz matrix $A\in \mathbb{B}_n$ is determined by two nonempty subsets $S$ and $T$, not necessarily disjoint, of $[n-1]:=\{1,\ldots, n-1\}$ so that $a_{ij}=1$ if and only if $j-i \in S$ or $i-j \in T$.
We assume that $S=\{s_1,\ldots,s_{k_1}\}$ and $T=\{t_1,\ldots,t_{k_2}\}$ where
$$
1\le s_1<\ldots<s_{k_1}<n\quad{\rm and}\quad 1\le t_1<\ldots<t_{k_2}<n.
$$
Note that $S=\{j\mid a_j=1\}$ and $T=\{i\mid a_{-i}=1\}$.
In this context, we denote a Boolean Toeplitz matrix $A$ associated with index sets $S$ and $T$ by $T_n\langle s_1,\ldots,s_{k_1};t_1,\ldots,t_{k_2}\rangle$ or simply by $T_n\langle S;T\rangle$.
Accordingly,
\[
s_1 = \min S, \quad s_{k_1} = \max S,\quad t_1 = \min T, \quad \text{and} \quad t_{k_2} = \max T.
\]

Given two finite subsets $S$ and $T$ of $\mathbb{Z}$, we denote $\gcd (S+T)$ and $\gcd(S\cup T)$ by
\[
\gcd (s+t \mid s \in S, t\in T) \quad \text{and} \quad \gcd (r \mid r \in S \cup T),
\]respectively.

Given a Boolean square matrix $A$, we call the digraph with adjacency matrix $A$ {\it the digraph of $A$} and denote it by $D(A)$.
In a digraph, if there is a directed walk $W$ from a vertex $u$ to a vertex $v$, then we write \[
u \xrightarrow{W} v,
\]
and if $W$ has length $1$, then $(u,v)$ is an arc and we simply write
\[
u \rightarrow v.
\]

Let $A= T_n \langle S; T \rangle$ be a Toeplitz matrix.
Cheon et al. \cite{exp} gave a necessary condition for $D(A)$ having a directed walk of a specific length from a vertex to a vertex as follows.

\begin{lemma}[\cite{exp}]\label{lem:cwcorr} Let $W$ be a $(u,v)$-directed walk of length $m$ in $D(A)$. Then there are nonnegative integer sequences
$(a_{i})_{i=1}^{k_1}$ and $(b_i)_{i=1}^{k_2}$ such that
\begin{eqnarray*}
 \sum_{i=1}^{k_1} a_i s_i - \sum_{i=1}^{k_2} b_i t_i = v-u \quad
\mbox{and} \quad m = \sum_{i=1}^{k_1}a_i + \sum_{i=1}^{k_2}
b_i.
\end{eqnarray*}\end{lemma}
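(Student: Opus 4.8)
The plan is to prove the lemma by a direct counting argument on the arcs of $W$, since the two asserted identities are really just the bookkeeping that results from classifying each step of the walk as a ``right step'' or a ``left step'' and tallying how many steps use each prescribed element of $S$ and of $T$.

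First I would record the structural fact underlying everything: by definition of $A = T_n\langle S; T\rangle$, a pair $(p,q)$ is an arc of $D(A)$ if and only if $q-p \in S$ or $p-q \in T$. Because every element of $S \cup T$ is a positive integer, these two alternatives are mutually exclusive, so each arc of $D(A)$ is unambiguously either a \emph{forward arc} with $q-p = s_j$ for a unique index $j \in [k_1]$, or a \emph{backward arc} with $p-q = t_j$ for a unique $j \in [k_2]$; uniqueness of the index $j$ comes from $s_1 < \cdots < s_{k_1}$ and $t_1 < \cdots < t_{k_2}$.

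Next, write $W$ as a sequence of arcs $e_1, \dots, e_m$ with $\operatorname{tail}(e_1) = u$, $\operatorname{head}(e_m) = v$, and $\operatorname{head}(e_\ell) = \operatorname{tail}(e_{\ell+1})$ for $1 \le \ell < m$. Partition the index set $[m]$ into $P \sqcup N$ according to whether $e_\ell$ is a forward or a backward arc, and let $\sigma : P \to [k_1]$ and $\tau : N \to [k_2]$ assign to each index the unique element of $S$, respectively $T$, realized by that arc. Setting $a_i := |\sigma^{-1}(i)|$ for $i \in [k_1]$ and $b_i := |\tau^{-1}(i)|$ for $i \in [k_2]$ gives nonnegative integers, and telescoping the per-arc displacements yields
\[
v - u \;=\; \sum_{\ell=1}^{m}\bigl(\operatorname{head}(e_\ell) - \operatorname{tail}(e_\ell)\bigr) \;=\; \sum_{\ell \in P} s_{\sigma(\ell)} - \sum_{\ell \in N} t_{\tau(\ell)} \;=\; \sum_{i=1}^{k_1} a_i s_i - \sum_{i=1}^{k_2} b_i t_i ,
\]
while counting arcs gives $m = |P| + |N| = \sum_{i=1}^{k_1} a_i + \sum_{i=1}^{k_2} b_i$, which is exactly the claimed conclusion. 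Equivalently, one could induct on $m$: delete the last arc $e_m$, apply the induction hypothesis to the $(u,\operatorname{tail}(e_m))$-walk $e_1\cdots e_{m-1}$, and then increment by $1$ the appropriate $a_j$ or the appropriate $b_j$ according to the type of $e_m$.

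I do not expect a genuine obstacle here; the statement is a soft observation about walks in $D(A)$, and this lemma is stated without proof in the excerpt because it is essentially immediate. The only point that needs a sentence of justification is the well-definedness of the forward/backward dichotomy and of the maps $\sigma$ and $\tau$, and this is precisely where one uses that the entries of $S$ and $T$ are positive and that $S$ and $T$ are sets, so that the realizing index of each arc is unique.
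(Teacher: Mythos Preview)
Your argument is correct; the telescoping of per-arc displacements together with the forward/backward classification is exactly the right (and essentially only) way to see this. Note that the paper does not supply its own proof of this lemma: it is quoted from \cite{exp} and stated without argument, so there is no in-paper proof to compare against, but your write-up matches the intended content.
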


They also gave a sufficient condition for $D(A)$ having a directed walk from a certain vertex to a certain vertex in the following way.
Yet, this sufficient condition possesses a rather strong condition $s_{k_1}+t_{k_2} \le n$.

\begin{lemma}[\cite{exp}]\label{lem:conv}
Let $A= T_n \langle S; T \rangle$ be a Toeplitz matrix with $s_{k_1}+t_{k_2} \le n$.
Suppose that there exist two integers $u,v \in [n]$ and some nonnegative integers $a_s$ and $b_t$ for $s \in S$ and $t\in T$ satisfying
\[
\sum_{s \in S} a_s s - \sum_{t \in T} b_t t = v-u.
\]
Then there exists a directed walk from $u$ to $v$ in $D(A)$.
\end{lemma}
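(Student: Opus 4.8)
Throughout, identify $D(A)$ with the digraph on vertex set $[n]$ in which $i\to j$ is an arc exactly when $j-i\in S$ or $i-j\in T$; equivalently, from a vertex $p$ one may move to $p+s$ for any $s\in S$ with $p+s\le n$, or to $p-t$ for any $t\in T$ with $p-t\ge 1$. Under this description a $(u,v)$-walk is nothing but an ordering of the multiset consisting of ``up-steps'' $+s$ (each $s$ taken with multiplicity $a_s$) and ``down-steps'' $-t$ (each $t$ taken with multiplicity $b_t$) whose partial sums, when added to $u$, never leave $[n]$; the hypothesis $\sum_{s\in S}a_s s-\sum_{t\in T}b_t t=v-u$ guarantees that the full sum lands exactly on $v$. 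The plan is to build such an ordering greedily, proving the statement by induction on $m=\sum_{s\in S}a_s+\sum_{t\in T}b_t$.

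The base case $m=0$ forces $u=v$ and the length-$0$ walk works. For the inductive step I would distinguish cases by which types of steps still remain. If only up-steps remain, then for any $s$ with $a_s>0$ we have $v-u=\sum_{s'}a_{s'}s'\ge s$, so $u+s\le v\le n$ and $u+s\ge 1$; taking the arc $u\to u+s$ therefore stays inside $[n]$, decreases $m$ by one, leaves all multiplicities nonnegative, and preserves the arithmetic identity, so the induction hypothesis finishes the job. The case in which only down-steps remain is entirely symmetric.

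The crucial case is when both an up-step and a down-step are still available. Here I would use the threshold $n-s_{k_1}=n-\max S$. If the current vertex $u$ satisfies $u\le n-s_{k_1}$, then $u+s\le u+s_{k_1}\le n$ for every $s\in S$, so any available up-step keeps us in $[n]$. If instead $u\ge n-s_{k_1}+1$, then for every $t\in T$ we have $u-t\ge u-t_{k_2}\ge (n-s_{k_1}+1)-t_{k_2}=n-s_{k_1}-t_{k_2}+1\ge 1$, using the hypothesis $s_{k_1}+t_{k_2}\le n$, so any available down-step keeps us in $[n]$. In either situation we take one step of the appropriate (and, by assumption of this case, still available) type, reduce $m$ by one, and invoke the induction hypothesis.

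I expect the only real obstacle to be exactly this last case, namely ruling out a ``stuck'' position: one where only up-steps remain but $u$ is too large to move up, or only down-steps remain but $u$ is too small to move down. The resolution hinges on the fact that the condition $s_{k_1}+t_{k_2}\le n$ makes the globally-forbidden-for-up region $\{u : u> n-s_{k_1}\}$ and the globally-forbidden-for-down region $\{u : u\le t_{k_2}\}$ disjoint, so at every vertex at least one of the two directions is safe regardless of step size; the case analysis above merely matches this safe direction with a step type that is actually still available (which is automatic once one of the two types has been exhausted). The remaining verifications---that after deleting one step the arithmetic identity still holds for the reduced multiplicities and that the standing hypothesis $s_{k_1}+t_{k_2}\le n$ is unchanged---are immediate.
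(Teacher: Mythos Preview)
Your argument is correct. The induction on $m=\sum_s a_s+\sum_t b_t$ with the three-way case split works exactly as you describe: the only-up and only-down cases are forced by the arithmetic constraint $v-u=\sum a_s s-\sum b_t t$ together with $v\in[n]$ (resp.\ $u\in[n]$), and in the mixed case the threshold $n-s_{k_1}$ cleanly separates the vertex set into a region where every up-step is safe and a region where every down-step is safe, thanks to $s_{k_1}+t_{k_2}\le n$. All bookkeeping (nonnegativity of the reduced multiplicities, preservation of the sum identity, invariance of the hypothesis on $s_{k_1}+t_{k_2}$) is immediate.

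As for the comparison: this paper does not give its own proof of the lemma---it is quoted verbatim from \cite{exp} and only used as background to motivate the later, weaker hypothesis~($\star$). So there is nothing in the present paper to compare your argument against. That said, your greedy construction is the standard proof of this result and is essentially what one finds in the original reference.
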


Subsequently, the authors \cite{period} improved Lemma~\ref{lem:conv} by substituting the condition $s_{k_1}+t_{k_2}  \le n$ with 
\begin{itemize}
\item[($\star$)] $s_1+t_{k_2} \le n$ and $s_{k_1}+t_1 \le n$,
\end{itemize}
albeit only guaranteeing it for sufficiently long walks by Theorem~\ref{lem:pqr}.
In \cite{period}, the periods of Toeplitz matrices satisfying ($\star$) are given. 
In this paper, we extend the Toeplitz matrix family whose periods can be computed from the family of Toeplitz matrices satisfying ($\star$) to the family of ``walk-ensured'' Toeplitz matrices as follows. 
In \cite{period}, it is shown that the family of Toeplitz matrices satisfying ($\star$) is contained in the family of walk-ensured Toeplitz matrices. 

\begin{theorem}[\cite{period}]\label{thm:minmax}
Let $A= T_n \langle S; T \rangle$ be a Toeplitz matrix.
If ($\star$) holds, then $A$ is a walk-ensured Toeplitz matrix.
\end{theorem}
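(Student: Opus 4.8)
The plan begins by unpacking the conclusion. To say that $A=T_n\langle S;T\rangle$ is \emph{walk-ensured} is to exhibit a threshold $N=N(n,S,T)$ such that, for all $u,v\in[n]$ and all $m\ge N$, the necessary condition of Lemma~\ref{lem:cwcorr} is already sufficient: whenever there are nonnegative integers $(a_s)_{s\in S}$ and $(b_t)_{t\in T}$ with $\sum_{s\in S}a_s s-\sum_{t\in T}b_t t=v-u$ and $m=\sum_{s\in S}a_s+\sum_{t\in T}b_t$, the digraph $D(A)$ has a $(u,v)$-walk of length exactly $m$. So the task is to push Lemma~\ref{lem:conv}, whose proof rests on the single bound $s_{k_1}+t_{k_2}\le n$, down to the two-sided bound ($\star$), at the cost of treating only long walks. (With this in hand, Theorem~\ref{lem:pqr} transfers the period formulas of \cite{period} to every matrix obeying ($\star$).)

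The engine is the pair of spanning subdigraphs of $D(A)$ coming from $T_n\langle s_1;T\rangle$ and $T_n\langle S;t_1\rangle$. The two inequalities in ($\star$) say exactly that these sub-Toeplitz matrices meet the hypothesis of Lemma~\ref{lem:conv}, since $\max\{s_1\}+\max T=s_1+t_{k_2}\le n$ and $\max S+\max\{t_1\}=s_{k_1}+t_1\le n$. Hence Lemma~\ref{lem:conv} already realizes, by walks lying in $D(A)$, every displacement of the form $a\,s_1-\sum_{t\in T}b_t t$ (one positive generator, arbitrary negative ones) and, symmetrically, every displacement $\sum_{s\in S}a_s s-b\,t_1$. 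Taking equal endpoints inside $T_n\langle s_1;t_1\rangle$ (which obeys $s_1+t_1\le s_{k_1}+t_1\le n$) yields a closed walk---a \emph{pump}---of length $(s_1+t_1)/\gcd(s_1,t_1)$ based at any prescribed vertex, which may be spliced into any other walk without disturbing its endpoints.

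The difficulty is that a generic feasible combination uses several \emph{large} generators $s_i$ $(i\ge 2)$ and $t_j$ $(j\ge 2)$ at once, and a step $+s_{k_1}$ immediately followed by a step $-t_{k_2}$ may fall off either end of $[n]$, since $s_{k_1}+t_{k_2}$ need not be $\le n$. I would handle this by a scheduling argument: order the prescribed moves so that the running vertex stays in a fixed safe window of $[n]$, and separate any such dangerous adjacency by interposing small steps $\pm s_1,\pm t_1$---the two inequalities in ($\star$) are precisely what keep the recovery moves legal (after the big descent $-t_{k_2}$ one is low enough to apply $+s_1$; after the big ascent $+s_{k_1}$ one is high enough to apply $-t_1$)---and, whenever the multiplicity constraints threaten to exhaust the small steps needed for these interpositions, padding with pumps. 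This can be organised as an induction on $\sum_{i\ge 2}a_{s_i}+\sum_{j\ge 2}b_{t_j}$, the total number of large-generator uses: one such use is discharged by a bounded legal detour anchored in the safe window, leaving a still-long feasible instance with one fewer large-generator use; the base case, where only $s_1$ and $t_1$ survive, is Lemma~\ref{lem:conv} applied to $T_n\langle s_1;t_1\rangle$.

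The main obstacle is the quantitative bookkeeping: the walk must realize the \emph{exact} multiset of moves (not merely its net), end at $v$, have length exactly $m$, and never leave $[n]$, all simultaneously. Because each inserted pump has a fixed length, pumps shift the total length only within the residue class they generate; pinning it to an arbitrary large $m$ therefore requires that the lengths of the available pumps, together with the slack freed when large generators are rerouted through detours, cover every sufficiently large admissible value---and this is governed by exactly the arithmetic of $\gcd(s+t\mid s\in S,\ t\in T)$ corrected by $\min S$ that also governs the period. Making that precise, and extracting an explicit $N$, is where the real work lies.
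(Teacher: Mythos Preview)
First, note that this paper does not prove Theorem~\ref{thm:minmax}: it is quoted from \cite{period} (see the citation in the theorem header), and the only supporting content here is Theorem~\ref{lem:pqr}, also cited from \cite{period}, which together with Proposition~\ref{prop:walk-ensured} yields the statement immediately. There is thus no in-paper argument to compare your proposal against; any real comparison would have to be with the proof in \cite{period} itself.

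That aside, your opening paragraph misstates the target. ``Walk-ensured'' is \emph{not} the assertion that the necessary condition of Lemma~\ref{lem:cwcorr} becomes sufficient for large $m$; that would be $Q_m(A)=R_m(A)$ in the notation of Definition~\ref{def:pqr}. The actual definition (see the paragraph after Theorem~\ref{thm:minmax}, and Proposition~\ref{prop:walk-ensured}) demands the strictly stronger $P_m(A)=R_m(A)$: for every large $m$ and every pair $u,v$ with $v-u\equiv ms_1\pmod{d^+}$, a $(u,v)$-walk of length $m$ must exist, regardless of whether a nonnegative representation $\sum_{s}a_s s-\sum_{t}b_t t=v-u$ with $\sum a_s+\sum b_t=m$ is handed to you in advance. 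Since $R_m\subseteq Q_m\subseteq P_m$ by \eqref{eq:pqr}, your scheduling/induction scheme is aimed only at the $Q_m\subseteq R_m$ half; the missing half is $P_m\subseteq Q_m$ for large $m$, a purely arithmetic statement (every element of the correct residue class in $\mathcal I_n$ admits such a nonnegative representation once $m$ is large enough). You gesture at this in your final paragraph but never carry it out, and without it the argument does not establish walk-ensuredness as defined here. A clean route, which is what Theorem~\ref{lem:pqr} packages, separates the two halves: prove $P_m=Q_m$ by a Frobenius/Chicken-McNugget argument in the numerical semigroup setting, then prove $Q_m=R_m$ by the combinatorial scheduling you outline. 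As a smaller point, your insistence that the walk realize ``the exact multiset of moves'' is an unnecessary self-imposed constraint: $R_m$ asks only for \emph{some} walk of length $m$, so you are free to rewrite the combination before attempting to realize it, which considerably eases the scheduling.
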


We now present the definition of walk-ensured Toeplitz matrices. 

\begin{definition}
A Toeplitz matrix $A = T_n \langle S ; T \rangle$ is {\it walk-ensured} if there exists a positive integer $M$ such that whenever a pair of vertices $u$ and $v$ in $D(A)$ satisfies $v-u \equiv \ell s_1 \pmod {d^+}$ for an integer $\ell \ge M$ and $d^+ = \gcd(S+T)$, there exists a directed $(u,v)$-walk of length $\ell$ in $D(A)$. 
\end{definition}

For instance, a primitive Toeplitz matrix is obviously walk-ensured.
In general, determining whether or not a Toeplitz matrix is walk-ensured is not easy.

In this paper, we provide periods of walk-ensured Toeplitz matrices (Theorem~\ref{thm:walk-ensuredpr}) and methods for calculating the period of a Toeplitz matrix satisfying specific conditions even if it is not necessarily walk-ensured (Theorems~\ref{cor:gcdsame}, \ref{thm:snksrc}, and \ref{thm:main3}). 

Though we have found a way to determine periods of walk-ensured Toeplitz matrices, but actually identifying walk-ensured Toeplitz matrices is not easy as we mentioned above. 
In this context, we propose a method to find a parameter to be added to $S$ or $T$ that preserves walk-ensuredness when a walk-ensured Toeplitz matrix $T_n\langle S; T \rangle$ is given (Theorem~\ref{thm:1}). 
By using this method, we present two large families of walk-ensured Toeplitz matrices (Theorems~\ref{thm:st} and \ref{thm:main1}).

Sections~\ref{sec:pre} and \ref{sec:cont} prepare the ground work to derive our main results given in Sections~\ref{sec:pfthm2} and \ref{sec:pfthm1} stated above. 
Section~\ref{sec:cont} especially introduces contractions of digraphs derived from residue classes to gain a concise overview of digraph structure, and investigates the relationship between the digraph $D$ of a walk-ensured Toeplitz matrix and the contraction of $D$.

\section{Preliminaries}\label{sec:pre}

The following sets were introduced in \cite{period} to deal with directed walks in the digraph of a Toeplitz matrix.
For convenience, we mean by a {\it directed walk of a matrix} $A$ a directed walk of the digraph of $A$. 

\begin{definition}[\cite{period}]\label{def:pqr}
For a Toeplitz matrix $A=T_n\langle S;T\rangle$ and a positive integer $i$, we introduce the following sets:
\begin{itemize}
\item $P_i(A) = \{\ell \in \mathcal{I}_n \mid \ell \equiv is_1 \pmod {d^+}\}$ where $d^+ = \gcd(S+T)$;
\item $Q_i(A) = \{\sum_{j=1}^{k_1} a_js_j - \sum_{j=1}^{k_2} b_j t_j \in \mathcal{I}_n\mid a_j,b_j \in \mathbb{Z}^+_0, \sum_{j=1}^{k_1} a_j + \sum_{j=1}^{k_2} b_j = i\}$;
\item $R_i(A)$ is the set of $\ell \in\mathcal{I}_n$ such that, for any vertices $u$ and $v$ with $v-u = \ell$, there exists a directed $(u,v)$-walk of length $i$ in $D(A)$
\end{itemize}
where $\mathcal{I}_n = [-n+1, n-1]$. 
\end{definition}

For example, let $A = T_6 \langle 2, 4 ; 5 \rangle$ and $i = 2$.  
Then $\gcd (S+T) = 1$ and so $P_2(A) = \{-5, -4, \ldots, 4, 5\}$. 
One may check that $Q_2(A) = \{ 2\times 2, 2-5, 4-5\} = \{-3, -1, 4\}$.
There are no walks from $5$ to $2$ and from $3$ to $2$, so $R_2(A) = \{4\}$.

In general, by Lemma~\ref{lem:cwcorr} and the following proposition, we see that
\begin{equation}\label{eq:pqr}
R_i(A) \subseteq Q_i(A) \subseteq P_i(A)
\end{equation}
 for any Toeplitz matrix $A$ and a positive integer $i$.

\begin{proposition}[\cite{period}]\label{lem:xiperiod}
	For nonempty sets $S, T \subseteq [n-1]$, let $d = {\rm gcd}(S+T)$.
	Then for any integers $a_i$, $b_j$,
	\[\sum_{i=1}^{k_1}a_is_i - \sum_{i=1}^{k_2}b_it_i \equiv \left(\sum_{i=1}^{k_1}a_i+\sum_{i=1}^{k_2}b_i\right) s_1 \pmod d.\]
\end{proposition}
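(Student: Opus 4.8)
The plan is to reduce everything modulo $d$ using only the defining property of $d=\gcd(S+T)$, namely that $d\mid s+t$ for every $s\in S$ and every $t\in T$. This is purely an arithmetic statement about the index sets: no hypothesis such as ($\star$) or $s_{k_1}+t_{k_2}\le n$ is needed, and the coefficients $a_i,b_j$ may be arbitrary integers since the argument will be linear and term-by-term.

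The first step is to extract two congruences from the definition of $d$. Since $S$ and $T$ are nonempty, $s_1=\min S$ and $t_1=\min T$ exist. For any $s_i\in S$ we have $d\mid s_i+t_1$ and $d\mid s_1+t_1$; subtracting gives $d\mid (s_i+t_1)-(s_1+t_1)=s_i-s_1$, i.e. $s_i\equiv s_1\pmod d$ for all $i\in[k_1]$. Likewise, for any $t_j\in T$ the integer $s_1+t_j$ lies in $S+T$, so $d\mid s_1+t_j$, i.e. $t_j\equiv -s_1\pmod d$ for all $j\in[k_2]$.

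The second step is to substitute these into the left-hand side. Working modulo $d$, $\sum_{i=1}^{k_1}a_is_i-\sum_{j=1}^{k_2}b_jt_j\equiv \sum_{i=1}^{k_1}a_is_1-\sum_{j=1}^{k_2}b_j(-s_1)=\bigl(\sum_{i=1}^{k_1}a_i+\sum_{j=1}^{k_2}b_j\bigr)s_1\pmod d$, which is exactly the claimed identity. There is essentially no obstacle; the only points requiring a word of care are that nonemptiness of $S$ and $T$ guarantees $s_1,t_1$ exist and that $d$ is a well-defined gcd of a nonempty set, and that since every step is an equality of residues multiplied by integer coefficients, the signs and magnitudes of the $a_i,b_j$ play no role.
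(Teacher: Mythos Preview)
Your argument is correct. The key observation---that $d\mid s_i+t_1$ and $d\mid s_1+t_1$ force $s_i\equiv s_1\pmod d$, and that $d\mid s_1+t_j$ forces $t_j\equiv -s_1\pmod d$---is exactly what is needed, and the linear substitution into the left-hand side goes through for arbitrary integer coefficients $a_i,b_j$ as you note.

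As for comparison: the present paper does not supply its own proof of this proposition; it is quoted from \cite{period} and stated without argument. Your proof is the natural one and is presumably what appears in the cited source.
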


The reverse direction of containment in \eqref{eq:pqr} holds under certain conditions.

\begin{theorem}[\cite{period}]\label{lem:pqr}
For a Toeplitz matrix $A = T_n \langle S; T \rangle$ with $\min S + \max T \le n$ and $\max S + \min T \le n$, there exists a positive integer $M$ such that $P_i(A) = Q_i(A) = R_i(A)$ for any $i \ge M$.
\end{theorem}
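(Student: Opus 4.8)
The plan is to exploit \eqref{eq:pqr}, which already furnishes $R_i(A)\subseteq Q_i(A)\subseteq P_i(A)$ for every $i$, and to prove the reverse inclusion $P_i(A)\subseteq R_i(A)$ for all large $i$; together these force $P_i(A)=Q_i(A)=R_i(A)$. First I would record two easy facts. Writing $d=\gcd(s_1,d^+)$, a short gcd manipulation from $d^+=\gcd(S+T)$ shows $d=\gcd(S\cup T)$; hence every arc of $D(A)$ moves a vertex by a multiple of $d$, and $D(A)$ is a disjoint union of the subdigraphs $D_r$ induced on the classes $V_r=\{v\in[n]:v\equiv r\pmod d\}$. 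Also, $(\star)$ gives $n\ge s_1+t_{k_2}\ge 2d$ (both $s_1,t_{k_2}$ being positive multiples of $d$), so each $V_r$ has at least two vertices; and if $\ell\in P_i(A)$ then $\ell\equiv is_1\pmod{d^+}$ forces $d\mid\ell$, so the endpoints $u,v$ of any pair with $v-u=\ell$ lie in a common $V_r$.

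The crucial step --- and the one I expect to be the main obstacle --- is a routing lemma: \emph{each $D_r$ is strongly connected}, so any $u,v$ with $d\mid v-u$ are joined by a directed walk of length $\le n-1$. This is exactly where the two inequalities in $(\star)$ are used. The seed is a ``never-stuck'' observation: if a vertex $w$ cannot take the step $+s_{k_1}$, i.e.\ $w>n-s_{k_1}$, then $s_{k_1}+t_1\le n$ forces $w>t_1$, so $w$ can take $-t_1$; symmetrically with $s_1+t_{k_2}\le n$. Turning this into strong connectivity requires a boundary-aware monotone-progress argument: from an arbitrary vertex one descends (or ascends) to the extreme vertex of its $V_r$ and from there reaches every other vertex of $V_r$ by a greedy interleaving of forward and backward steps that never leaves $[1,n]$. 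The subtlety --- and the reason Lemma~\ref{lem:conv} cannot be quoted directly --- is that under the present weaker hypothesis an individual step may overshoot an endpoint of $[1,n]$, and it is precisely $(\star)$ that guarantees a safe substitute step is always at hand.

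Next I would establish a padding lemma. For every vertex $w$ lying on a cycle --- by the routing lemma and $|V_r|\ge2$, every vertex --- the set of lengths of closed walks at $w$ is a numerical sub-semigroup of $(d^+/d)\mathbb{Z}_{\ge0}$ containing all sufficiently large multiples of $d^+/d$. Containment in $(d^+/d)\mathbb{Z}_{\ge0}$ is immediate from Proposition~\ref{lem:xiperiod}: a closed walk has displacement $0\equiv(\text{length})\,s_1\pmod{d^+}$, and $\gcd(s_1,d^+)=d$. For the converse, the closed walks that use $t_1/d$ steps $+s_i$ and $s_i/d$ steps $-t_1$ (and the symmetric family built from $s_1$ and $t_j$) have lengths $(s_i+t_1)/d$ and $(s_1+t_j)/d$, whose overall gcd is $\frac{1}{d}\gcd(S+T)=d^+/d$; a balanced interleaving of their up- and down-steps confines each to a short window, so $(\star)$ lets each be realised inside $[1,n]$, and strong connectivity lets one re-route through $w$ a closed walk incorporating any prescribed one of them. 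A standard numerical-semigroup argument then yields every large multiple of $d^+/d$.

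Finally I would assemble the pieces. Given $\ell\in P_i(A)$ with $i$ large and any $u,v\in[n]$ with $v-u=\ell$ (so $u,v\in V_r$ for a common $r$), take a $(u,v)$-walk $W_0$ of length $i_0\le n-1$ by the routing lemma, and let $w$ be a vertex it visits. Since $W_0$ has displacement $\ell$ and $\ell\in P_i(A)$, Proposition~\ref{lem:xiperiod} gives $(i-i_0)s_1\equiv0\pmod{d^+}$, so $m:=i-i_0$ is a multiple of $d^+/d$. Choosing $M$ to exceed $n-1$ plus the numerical-semigroup threshold from the padding lemma (maximised over the finitely many possible $w$), $m$ is a nonnegative multiple of $d^+/d$ that is the length of some closed walk at $w$ staying in $[1,n]$; splicing it into $W_0$ at $w$ yields a $(u,v)$-walk of length exactly $i$. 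As $u,v$ were arbitrary with $v-u=\ell$, this gives $\ell\in R_i(A)$, completing the proof that $P_i(A)=Q_i(A)=R_i(A)$ for all $i\ge M$.
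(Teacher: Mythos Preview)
This theorem is not proved in the present paper: it is quoted verbatim from \cite{period} and used as an input. Consequently there is no proof here to compare your proposal against; any comparison would have to be made against the argument in \cite{period}, which this paper does not reproduce.

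On its own merits your plan is sound in outline --- the three-step scheme (strong connectivity of each residue class $D_r$; a padding lemma producing closed walks of every large multiple of $d^+/d$; splicing a closed walk into a short routing walk) is a standard and correct way to establish $P_i(A)\subseteq R_i(A)$. The padding and assembly steps are unproblematic: the elementary cycles built from $s_i,t_1$ (length $(s_i+t_1)/d$) and from $s_1,t_j$ (length $(s_1+t_j)/d$) do have gcd $d^+/d$, and $(\star)$ exactly says $s_i+t_1\le n$ and $s_1+t_j\le n$, so each can be realised inside $[1,n]$ by interleaving. The one place that deserves more than a sketch is your routing lemma. Your ``never-stuck'' observation only rules out sinks and sources; it does not by itself yield strong connectivity of $D_r$, and the phrase ``boundary-aware monotone-progress argument'' hides the real work. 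A cleaner way to close this gap, still within your framework, is to note that for each pair $(s_i,t_1)$ and $(s_1,t_j)$ one has $s_i+t_1\le n$ and $s_1+t_j\le n$ by $(\star)$, so Lemma~\ref{lem:conv} applies to the sub-Toeplitz matrices $T_n\langle s_i;t_1\rangle$ and $T_n\langle s_1;t_j\rangle$ individually; this gives, for every pair $u,v$ with $\gcd(s_i,t_1)\mid v-u$ (resp.\ $\gcd(s_1,t_j)\mid v-u$), a directed $(u,v)$-walk, and since $\gcd\bigl(\{\gcd(s_i,t_1)\}_i\cup\{\gcd(s_1,t_j)\}_j\bigr)=\gcd(S\cup T)=d$, concatenating such walks reaches any $v$ with $d\mid v-u$. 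With that patch your proposal goes through.
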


We generally refer to a Toeplitz matrix with such an $M$ in the above theorem as a ``walk-ensured" Toeplitz matrix.

\begin{proposition}\label{prop:walk-ensured}
A Toeplitz matrix $A = T_n \langle S ; T \rangle$ is walk-ensured if and only if there exists some positive integer $M$ such that $P_i(A) = R_i(A)$ for every integer $i \ge M$.
\end{proposition}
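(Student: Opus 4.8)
The plan is to treat the statement as a direct unfolding of the two definitions involved, the only genuine ingredients being a careful bookkeeping of quantifiers together with the inclusion $R_i(A)\subseteq P_i(A)$ recorded in \eqref{eq:pqr}. Two elementary observations will drive everything. First, since \eqref{eq:pqr} always gives $R_i(A)\subseteq P_i(A)$, proving $P_i(A)=R_i(A)$ is the same as proving $P_i(A)\subseteq R_i(A)$. Second, the digraph $D(A)$ has vertex set $\{1,\dots,n\}$, so for any two vertices $u,v$ the difference $v-u$ automatically lies in $\mathcal{I}_n=[-n+1,n-1]$; consequently, for a fixed $\ell$, the collection of all differences $v-u$ that are congruent to $\ell s_1 \pmod{d^+}$ is exactly the set $P_\ell(A)$.

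For the forward implication I would assume $A$ is walk-ensured with constant $M$ and fix an integer $i\ge M$. Take any $\ell\in P_i(A)$ (so $\ell\in\mathcal{I}_n$ and $\ell\equiv i s_1\pmod{d^+}$) and any pair of vertices $u,v$ with $v-u=\ell$; such pairs exist because $\ell\in\mathcal{I}_n$. Then $v-u\equiv i s_1\pmod{d^+}$, so applying the definition of walk-ensuredness with the walk length taken to be $i$ produces a directed $(u,v)$-walk of length $i$. Since $u,v$ were arbitrary subject to $v-u=\ell$, this shows $\ell\in R_i(A)$; hence $P_i(A)\subseteq R_i(A)$, and by the first observation $P_i(A)=R_i(A)$.

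For the converse I would assume $P_i(A)=R_i(A)$ for all $i\ge M$ and verify the definition of walk-ensuredness with the same $M$: given any integer $\ell\ge M$ and any vertices $u,v$ with $v-u\equiv \ell s_1\pmod{d^+}$, set $m=v-u$; the second observation gives $m\in\mathcal{I}_n$, so $m\in P_\ell(A)=R_\ell(A)$, and the definition of $R_\ell(A)$ then yields a directed $(u,v)$-walk of length $\ell$. The only point that needs care — and essentially the only way the argument could go wrong — is not to confuse the two roles of $\ell$: in the symbols $P_i(A)$ and $R_i(A)$ the subscript is the walk length, whereas an element of $P_i(A)$ is a value of $v-u$. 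Beyond keeping this straight, there is no real obstacle.
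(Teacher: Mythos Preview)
Your proposal is correct and follows essentially the same approach as the paper: both arguments reduce the equivalence to the observation that, by the inclusion $R_i(A)\subseteq P_i(A)$ from \eqref{eq:pqr}, the equality $P_i(A)=R_i(A)$ is the same as $P_i(A)\subseteq R_i(A)$, and that this inclusion is precisely a restatement of the walk-ensured definition once one notes that differences of vertices automatically lie in $\mathcal{I}_n$. The paper's proof simply compresses your forward and converse implications into a single sentence asserting this definitional equivalence.
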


\begin{proof}
By the definitions of $P_i(A)$ and $R_i(A)$, $A$ is walk-ensured if and only if there exists some positive integer $M$ such that $P_i(A) \subseteq R_i(A)$ for any $i \ge M$. 
Yet, by \eqref{eq:pqr}, $P_i(A) \subseteq R_i(A)$ if and only if $P_i(A) = R_i(A)$ for any positive integer $i$. 
Therefore $A$ is walk-ensured if and only if there exists some positive integer $M$ such that $P_i(A) = R_i(A)$ for every integer $i \ge M$. 
\end{proof}

\section{Contractions of digraphs derived from residue classes}~\label{sec:cont}

For a walk-ensured Toeplitz matrix $A$, $P_i(A)=R_i(A)$ by Proposition~\ref{prop:walk-ensured}. Accordingly, the following lemma demonstrates that the directed walks between two vertices exist periodically in terms of their lengths.

\begin{lemma}[\cite{period}]\label{lem:abcd}
For nonempty sets $S, T \subseteq[n-1]$, let $A = T_n \langle S; T \rangle$, $d^+=\gcd(S+T)$, and $d = \gcd (S \cup T)$.
Then the following are true for any positive integer $i$:
\begin{itemize}
\item[(a)] $P_i (A)= P_{i+d^+/d}(A)$;
\item[(b)] $P_i(A),\ldots, P_{i-1+d^+/d}(A)$ are mutually disjoint;
\item[(c)] $P_i(A) = \{\ell \in \mathcal{I}_n \mid \ell-s_1 \in P_{i-1} (A) \text{ or } \ell+t_1 \in P_{i-1}(A)\} $ for any $i \ge 2$
\end{itemize}
where $\mathcal{I}_n$ denotes the set of integers on the interval $[-n+1, n-1]$.
\end{lemma}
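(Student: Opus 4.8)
The plan is to reduce all three parts to elementary statements about residue classes modulo $d^+$, after recording a few preliminary facts. First, since $d=\gcd(S\cup T)$ divides every element of $S$ and of $T$, it divides every sum $s+t$ and hence divides $d^+=\gcd(S+T)$; thus $d^+/d$ is a positive integer. Second, $d^+\mid s_1+t_1$ by the definition of $d^+$, so $t_1\equiv -s_1\pmod{d^+}$. Third — the one point requiring a short argument — I would show $\gcd(d^+,s_1)=d$: writing $g=\gcd(d^+,s_1)$, the divisibilities $d\mid d^+$ and $d\mid s_1$ give $d\mid g$, while conversely $g\mid d^+\mid s_1+t$ together with $g\mid s_1$ forces $g\mid t$ for every $t\in T$, and then $g\mid d^+\mid s+t_1$ together with $g\mid t_1$ forces $g\mid s$ for every $s\in S$, whence $g\mid\gcd(S\cup T)=d$. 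In particular the order of $s_1$ in the additive group $\mathbb{Z}/d^+\mathbb{Z}$ equals $d^+/\gcd(d^+,s_1)=d^+/d$.

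Part (a) is then immediate: $(d^+/d)s_1=d^+\cdot(s_1/d)\equiv 0\pmod{d^+}$, so $is_1\equiv(i+d^+/d)s_1\pmod{d^+}$, and since $P_i(A)$ and $P_{i+d^+/d}(A)$ are carved out of the same index set $\mathcal{I}_n$ by equivalent congruence conditions, they coincide. For part (b), because $d^+/d$ is the order of $s_1$ modulo $d^+$, no two of the $d^+/d$ indices in the window $i,i+1,\dots,i-1+d^+/d$ can yield the same multiple of $s_1$ modulo $d^+$; hence $is_1,\dots,(i-1+d^+/d)s_1$ lie in $d^+/d$ distinct residue classes modulo $d^+$, and since each $P_j(A)$ is contained in the class of $js_1$ modulo $d^+$, the sets $P_i(A),\dots,P_{i-1+d^+/d}(A)$ are pairwise disjoint.

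For part (c), fix $i\ge 2$. The congruence $t_1\equiv -s_1\pmod{d^+}$ makes each of ``$\ell-s_1\equiv(i-1)s_1$'' and ``$\ell+t_1\equiv(i-1)s_1$'' (mod $d^+$) equivalent to ``$\ell\equiv is_1$'' (mod $d^+$). Hence any $\ell$ lying in the right-hand side set belongs to $\mathcal{I}_n$ and satisfies $\ell\equiv is_1\pmod{d^+}$, i.e.\ $\ell\in P_i(A)$. Conversely, given $\ell\in P_i(A)$, it remains only to verify that at least one of $\ell-s_1,\ell+t_1$ lies in $\mathcal{I}_n=[-n+1,n-1]$, for then that element lies in $P_{i-1}(A)$ by the equivalence just noted. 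Since $s_1,t_1\ge 1$ we have $\ell-s_1\le n-2$ and $\ell+t_1\ge -n+2$ automatically, so if both failed we would need $\ell-s_1\le -n$ and $\ell+t_1\ge n$, giving $2n\le s_1+t_1$; this contradicts $s_1+t_1\le 2(n-1)$, and part (c) follows.

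I expect essentially no obstacle here: the only ingredient beyond unwinding the definitions is the identity $\gcd(d^+,s_1)=\gcd(S\cup T)$, which powers parts (a) and (b), and the crude bound $s_1+t_1\le 2(n-1)$, which is exactly what rules out the bad case in part (c).
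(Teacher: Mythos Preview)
Your argument is correct in all three parts. The key identity $\gcd(d^+,s_1)=d$ is exactly what drives (a) and (b), and your handling of the range check in (c) via $s_1+t_1\le 2(n-1)<2n$ is clean and sufficient.

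There is nothing to compare against, however: in this paper the lemma is quoted from~\cite{period} and stated without proof here. So your write-up is not competing with a proof in the present paper but rather supplying one where none is given. For what it is worth, your approach is the natural one and is essentially the argument one would expect in~\cite{period} as well: once $\gcd(d^+,s_1)=d$ is established, parts (a) and (b) are just the statement that $s_1$ has order $d^+/d$ in $\mathbb{Z}/d^+\mathbb{Z}$, and part (c) is a direct congruence check plus the observation that $\ell-s_1$ and $\ell+t_1$ cannot simultaneously fall outside $\mathcal{I}_n$.
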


In the digraph of a walk-ensured Toeplitz matrix $T_n\langle S;T\rangle$, two vertices belonging to the same congruence classes of modulo $d = \gcd(S \cup T)$ are strongly connected as shown in the following proposition.

\begin{proposition}\label{cor:walk}
Let $D$ be the digraph of a walk-ensured Toeplitz matrix $T_n\langle S;T\rangle$, and $d = \gcd(S \cup T)$.
If $u \equiv v \pmod {d}$, then there is a directed $(u,v)$-walk in $D$.
\end{proposition}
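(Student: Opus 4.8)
The plan is to reduce the statement to the defining property of walk-ensuredness by a short congruence computation. Set $d^+ = \gcd(S+T)$ and $s_1 = \min S$, and recall that the vertex set of $D = D(A)$ is $[n]$, so that $v-u \in \mathcal{I}_n$ automatically. Since $A$ is walk-ensured, Proposition~\ref{prop:walk-ensured} supplies a positive integer $M$ with $P_i(A) = R_i(A)$ for every $i \ge M$. Hence it suffices to produce an integer $\ell \ge M$ with $v-u \equiv \ell s_1 \pmod{d^+}$: for such an $\ell$ we get $v-u \in P_\ell(A) = R_\ell(A)$, and by the definition of $R_\ell(A)$ this yields a directed $(u,v)$-walk (of length $\ell$) in $D$.

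The first step I would carry out is to record the arithmetic identity $\gcd(d^+, s_1) = d$, where $d = \gcd(S \cup T)$. One inclusion is immediate: $d$ divides every $s_i$ and every $t_j$, hence every sum $s_i + t_j$ and thus $d^+$, and of course $d \mid s_1$, so $d \mid \gcd(d^+, s_1)$. For the reverse, $d^+$ divides $s_1 + t_1$ and $s_1 + t_j$, hence $t_j - t_1$, and similarly $d^+ \mid s_i - s_1$; consequently $\gcd(d^+, s_1)$ divides $s_1$, every $s_i$, and (via $s_1 + t_1$) every $t_j$, so it divides $d$. Now, since $u \equiv v \pmod d$, the integer $v - u$ is a multiple of $d = \gcd(d^+, s_1)$, so the linear congruence $\ell s_1 \equiv v - u \pmod{d^+}$ has a solution $\ell_0$. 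Because $d \mid s_1$, we have $(d^+/d)\,s_1 = (s_1/d)\,d^+ \equiv 0 \pmod{d^+}$, so $\ell_0 + k(d^+/d)$ is again a solution for every $k \in \mathbb{Z}$; taking $k$ large enough gives a solution $\ell \ge M$, which finishes the argument as explained above.

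The only slightly delicate point is the identity $\gcd(d^+, s_1) = d$, since this is exactly what makes the congruence $\ell s_1 \equiv v-u \pmod{d^+}$ solvable — merely knowing $d \mid (v-u)$ would not suffice if $\gcd(d^+, s_1)$ were a proper multiple of $d$. Everything else is routine bookkeeping. Alternatively, one can run the whole argument through Lemma~\ref{lem:abcd}: since $d^+ \le s_1 + t_1 \le 2n-2$, each $P_i(A)$ is nonempty, and then parts (a) and (b) show that $\bigcup_{i \ge 1} P_i(A)$ is precisely the set of multiples of $d$ lying in $\mathcal{I}_n$; as $v-u$ is such a multiple it lies in some $P_{i_0}(A)$, hence in $P_{i_0 + kd^+/d}(A)$ for all $k \ge 0$ by (a), and one again picks the index $\ge M$ and invokes $P_i(A) = R_i(A)$.
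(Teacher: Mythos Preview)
Your argument is correct and follows essentially the same route as the paper's proof: both reduce to exhibiting an index $\ell \ge M$ with $v-u \equiv \ell s_1 \pmod{d^+}$, and both rest on the arithmetic fact that $d = \gcd(d^+, s_1)$ (the paper invokes it implicitly via B\'ezout's identity, writing $d = ps_1 + qd^+$, while you prove it explicitly). Your handling of the shift to make $\ell \ge M$ is in fact a bit cleaner than the paper's, which asserts $pk + m d^+/d \ge m$ without commenting on the sign of $pk$.
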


\begin{proof}
Let $A = T_n \langle S; T\rangle$.
There are some integers $p$ and $q$ such that $d = ps_1+qd^+$ by B\'ezout's identity where $d^+ = \gcd(S+T)$.
By the hypothesis, $d \mid v-u$ and so there exists some integer $k$ such that $v-u = k d$.
Since $A$ is walk-ensured, there exists some positive integer $m$ such that $P_i (A) = R_i(A)$ for any $i \ge m$.
Now we have
\begin{align*}
v-u &= kd \\
&= pks_1 + qkd^+ \\
&\equiv pks_1 \pmod{d^+} \\
&\equiv pks_1 + md^+/ds_1 \pmod{d^+}
\end{align*}
and so $v - u \in P_{pk+md^+/d}(A)$ by Lemma~\ref{lem:abcd}(a).
Since $pk+md^+/d \ge m$, $P_{pk+m d^+/d}(A) = R_{pk+m d^+/d}(A)$.
Therefore $v-u \in R_{pk+md^+/d}(A)$ and so there is a directed $(u,v)$-walk.
\end{proof}

Based on Proposition~\ref{cor:walk}, we introduce a contraction of a digraph by using certain residue classes as follows.

\begin{definition}
For a digraph $D$ of order $n$ and a positive integer $d$, let $D/\mathbb{Z}_d$ be the digraph obtained from $D$ by contracting the vertices in the congruence class $\{v \in [n] \mid v \equiv i \pmod d\}$ of $i$ to $i$ for each $i = 1, 2, \ldots, d$, that is, there is an arc $(i,j)$ in $D/\mathbb{Z}_d$ if and only if there is an arc $(m,l)$ in the digraph of $D$ such that $i \equiv m \pmod d$ and $j \equiv l \pmod d$.
\end{definition}

For the digraph $D$ of a Toeplitz matrix with a rather simple structure, $D/\mathbb{Z}_d$ can be easily determined.

\begin{proposition}\label{prop:ddcycle}
Given integers $d,n,s$ with $d \nmid s$ and $s \le n - d$, $D(T_n\langle s; \emptyset \rangle)/\mathbb{Z}_d = D(T_d \langle r;d-r \rangle)$ where $r$ is the remainder when $s$ is divided by $d$.
\end{proposition}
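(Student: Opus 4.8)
The plan is to compute the arc sets of both digraphs explicitly and check that they coincide on the common vertex set $[d]$. First I would observe that $D(T_n\langle s;\emptyset\rangle)$ has vertex set $[n]$ and arc set exactly $\{(m,m+s)\mid 1\le m\le n-s\}$, since $T=\emptyset$ forces every arc to come from the relation $j-i=s$. Passing to the contraction, $(i,j)$ is an arc of $D(T_n\langle s;\emptyset\rangle)/\mathbb{Z}_d$ if and only if there is an integer $m$ with $1\le m\le n-s$, $m\equiv i\pmod d$, and $m+s\equiv j\pmod d$; as $s\equiv r\pmod d$, the last congruence reads $j\equiv i+r\pmod d$.

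The role of the hypothesis $s\le n-d$ is to guarantee $n-s\ge d$, so that the interval $[1,n-s]$ contains a representative of every residue class modulo $d$. Consequently, for each $i\in[d]$ there does exist an admissible tail $m\equiv i\pmod d$, and hence the arc set of $D(T_n\langle s;\emptyset\rangle)/\mathbb{Z}_d$ is precisely $\{(i,j)\mid i,j\in[d],\ j\equiv i+r\pmod d\}$.

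It then remains to identify this with $D(T_d\langle r;d-r\rangle)$. Here $d\nmid s$ ensures $r\neq 0$, so $1\le r\le d-1$ and $1\le d-r\le d-1$, and thus $\{r\}$ and $\{d-r\}$ are legitimate index sets for a $d\times d$ Toeplitz matrix. Its digraph has vertex set $[d]$, and $(i,j)$ is an arc if and only if $j-i=r$ or $i-j=d-r$; for $i,j\in[d]$ these are exactly the two ways to realize $j\equiv i+r\pmod d$. Hence both digraphs have vertex set $[d]$ and identical arc sets, which gives the claimed equality.

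Since the whole argument is a direct unwinding of the definitions, I do not expect a serious obstacle; the only points requiring care are the bookkeeping of residue-class representatives (the contraction uses $\{1,\ldots,d\}$ rather than $\{0,\ldots,d-1\}$) and the observation that the hypothesis $s\le n-d$, rather than merely $s\le n-1$, is exactly what is needed to push a complete residue system into the set of admissible tails $m$, thereby making every class $i\in[d]$ appear as a source in the contraction.
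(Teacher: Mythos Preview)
Your proposal is correct and follows essentially the same approach as the paper: both arguments unwind the definitions to show that the arc set of the contraction is exactly $\{(i,j)\in[d]^2\mid j\equiv i+r\pmod d\}$ and then identify this with the arc set of $D(T_d\langle r;d-r\rangle)$. The only cosmetic difference is that the paper exhibits the arcs $(1,1+s),\ldots,(d,d+s)$ explicitly and splits into the cases $1\le i\le d-r$ and $d-r+1\le i\le d$, whereas you phrase the same computation via the observation that $[1,n-s]$ contains a complete residue system modulo $d$; the substance is identical.
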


\begin{proof}
Let $r$ be an integer with $r \equiv s \pmod d$ and $0< r < d$.
By definition, there are arcs
\begin{equation}\label{eq:arcs}
(1, 1+s), (2, 2+s), \ldots, (d, d+s)
\end{equation}
in the digraph of $T_n \langle s; \emptyset \rangle$.
Then, for any integer $1 \le i \le d-r$, there is an arc $(i, i+r)$ in $D$.

Now consider an integer $i$ with $d-r+1 \le i \le d$.
Then $i+s \equiv i+r-d \pmod{d}$ and $1 \le i+r-d \le d$. 
Therefore there is an arc $(i, i+r-d)$ in $D$ by \eqref{eq:arcs}.
Therefore the digraph of $T_d \langle r;d-r \rangle$ is a subgraph of $D$.

Take any arc $(i, j)$ in $D$.
Then $j-i \equiv s \pmod d$.
Since $1 \le i \le d$ and $1 \le j \le d$, $-d+1 \le j-i \le d-1$.
Since $0 < r \le d-1$, $j-i = r $ or $r-d$.
Therefore $D$ is a subgraph of the digraph of $T_d \langle r;d-r \rangle$.
Thus $D=T_d \langle r;d-r \rangle$.
\end{proof}

\begin{proposition}\label{prop:cycle}
Let $D$ be the digraph of $T_n \langle s ; n-s \rangle$.
Then $D$ is a disjoint union of directed cycles each of which has the vertex set $\{i, i+d, \ldots, i+(n/\gcd(n,s)-1)d\}$ for some $i \in [d]$.
\end{proposition}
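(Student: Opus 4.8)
The plan is to show that $D$ is a functional digraph in both directions --- every vertex has out-degree and in-degree exactly $1$ --- so that $D$ is automatically a vertex-disjoint union of directed cycles, and then to identify the out-neighbor map explicitly as translation by $s$ modulo $n$, whose orbits are exactly the claimed vertex sets. Throughout I write $d = \gcd(n,s) = \gcd(s, n-s) = \gcd(S \cup T)$, which is the quantity $d$ appearing in the statement.

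First I would analyze the out-arcs at a fixed vertex $u \in [n]$. Since $S = \{s\}$ and $T = \{n-s\}$, the only candidate out-arcs at $u$ are $u \to u+s$ (present if and only if $u+s \le n$, i.e.\ $u \le n-s$) and $u \to u-(n-s)$ (present if and only if $u-(n-s) \ge 1$, i.e.\ $u \ge n-s+1$). The two conditions $u \le n-s$ and $u \ge n-s+1$ partition $[n]$, so exactly one of the two arcs is present; hence $u$ has out-degree $1$, and in either case the unique out-neighbor $f(u)$ satisfies $f(u) \equiv u+s \pmod n$ with $f(u) \in [n]$. The same bookkeeping applied to the in-arcs at a vertex $v$ --- the candidates being $v-s \to v$, present iff $v \ge s+1$, and $v+(n-s) \to v$, present iff $v \le s$ --- shows every vertex has in-degree $1$ as well. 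A finite digraph in which every vertex has both in-degree and out-degree $1$ is a vertex-disjoint union of directed cycles, which settles the first half of the statement. Here the crucial point is simply that $s + (n-s) = n$, which is what makes the two types of arcs at each vertex mutually exclusive and exhaustive.

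It then remains to describe the cycles. Identifying $[n]$ with $\mathbb{Z}/n\mathbb{Z}$ via residues, the out-neighbor map $f$ is the bijection ``add $s$'', so the vertex set of each cycle is an orbit of the action of the cyclic subgroup $\langle s \rangle \le \mathbb{Z}/n\mathbb{Z}$, i.e.\ a coset of $\langle s \rangle$, and the length of the cycle is $|\langle s \rangle| = n/\gcd(n,s)$. Since $\langle s \rangle = \langle d \rangle$, each such coset, when written with representatives in $[n]$, is precisely a residue class $\{i, i+d, \ldots, i+(n/\gcd(n,s)-1)d\}$ for a unique $i \in [d]$. This gives the stated form of the cycles. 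The only step demanding any care is the boundary bookkeeping establishing out-degree and in-degree exactly one; the identification of the orbits with residue classes modulo $d$ is then routine, so I do not expect a serious obstacle.
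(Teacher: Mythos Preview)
Your proof is correct and follows essentially the same approach as the paper: both first establish that every vertex has in- and out-degree exactly $1$ by the same case analysis on whether $u \le n-s$ or $u > n-s$, and then identify the cycles via the observation that each arc changes the vertex by $s$ modulo $n$. Your packaging of the second step as orbits of the translation-by-$s$ map on $\mathbb{Z}/n\mathbb{Z}$ is a bit more concise than the paper's explicit divisibility-and-containment argument, but the underlying idea is identical.
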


\begin{proof}
 If $v \le n-s$, then $N^+(v) = \{ v+s\}$ and if $v > n-s$, then $N^+(v) = \{v-(n-s)\}$.
Moreover, if $v > s$, then $N^-(v) = \{ v-s \}$ and if $v \le s$, then $N^-(v) = \{ v+ (n-s)\}$.
Therefore every vertex in $D$ has indegree one and outdegree one.
Thus $D$ is a disjoint union of directed cycles.

Let $\gcd(n,s) = d$.
If there is a $(u,v)$-arc, then $v-u \equiv s \pmod{n}$.
Take a cycle $C:=v_1 \rightarrow v_2 \rightarrow \cdots \rightarrow v_k \rightarrow v_1$.
Then $0 = (v_1-v_k)+(v_k-v_{k-1})+\cdots + (v_2-v_1) \equiv ks \pmod{n}$.
Therefore $k \mid n/\gcd(n,s)$.
Moreover, if there is a $(u,v)$-arc, then $u \equiv v \pmod{d}$.
Thus \[\{v_1, \ldots, v_k\} \subseteq \{i, i+d, \ldots, i+(n/\gcd(n,s)-1)d\}\]
for some $i \in [d]$.
Hence \[\{v_1, \ldots, v_k \} = \{i, i+d, \ldots, i+ (n/\gcd(n,s)-1)d \}\] since $k \mid n/\gcd(n,s)$. 
\end{proof}

The following is an immediate consequence of Propositions~\ref{prop:ddcycle} and \ref{prop:cycle}.

\begin{corollary}\label{cor:ddcycle}
Given integers $d,n,s$ with $d \nmid s$ and $s \le n - d$, $D(T_n\langle s; \emptyset \rangle)/\mathbb{Z}_d$ is a disjoint union of directed cycles.
\end{corollary}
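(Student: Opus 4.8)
The plan is to chain Propositions~\ref{prop:ddcycle} and \ref{prop:cycle}, so the argument is short; the only care needed is to check that the parameter substitution carrying the conclusion of the first proposition into the hypothesis of the second is legitimate.

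First I would set $r$ to be the remainder of $s$ upon division by $d$. The hypothesis $d \nmid s$ is precisely what guarantees $r \neq 0$, hence $1 \le r \le d-1$; in particular $d-r$ is again an element of $[d-1]$, so $T_d\langle r; d-r\rangle$ is a well-defined Toeplitz matrix, and moreover it has the special shape $T_m\langle s'; m-s'\rangle$ with $m=d$ and $s'=r$, which is exactly the form to which Proposition~\ref{prop:cycle} applies.

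Next, since the hypotheses $d \nmid s$ and $s \le n-d$ of Proposition~\ref{prop:ddcycle} are precisely those assumed in the corollary, that proposition gives the identification
\[
D(T_n\langle s; \emptyset\rangle)/\mathbb{Z}_d = D(T_d\langle r; d-r\rangle).
\]
Now I would apply Proposition~\ref{prop:cycle} with its ``$n$'' taken to be $d$ and its ``$s$'' taken to be $r$ (so that its ``$n-s$'' is $d-r$): the digraph $D(T_d\langle r; d-r\rangle)$ is a disjoint union of directed cycles, indeed each cycle has vertex set $\{i, i+e, \ldots, i+(d/e-1)e\}$ for some $i \in [e]$, where $e = \gcd(d,r) = \gcd(d,s)$. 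Combining this with the displayed identity yields that $D(T_n\langle s; \emptyset\rangle)/\mathbb{Z}_d$ is a disjoint union of directed cycles, which is the assertion.

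There is essentially no obstacle here: the content lives entirely in the two cited propositions, and the only point requiring vigilance is that $d \nmid s$ forces $r \in [d-1]$, hence $d-r \in [d-1]$ as well; without this, the symbol $T_d\langle r; d-r\rangle$ would not even parse as a Toeplitz matrix over $[d-1]$ and Proposition~\ref{prop:cycle} could not be invoked. If one wants the sharper statement describing the cycles explicitly, it is worth recording the identity $\gcd(d,r) = \gcd(d,s)$, which is immediate from $s \equiv r \pmod d$.
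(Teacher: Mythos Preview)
Your argument is correct and is exactly the approach the paper indicates: chaining Proposition~\ref{prop:ddcycle} with Proposition~\ref{prop:cycle}, with the only check being that $d\nmid s$ forces $r\in[d-1]$ so that $T_d\langle r;d-r\rangle$ is well-formed. The paper records this corollary as an ``immediate consequence'' of those two propositions without further detail, so your write-up in fact supplies more than the paper does.
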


In the rest of paper, we will investigate the effect on the lengths of directed walks between two vertices when adding parameter $s^* \in [n-1]$ to either $S$ or $T$ in the Toeplitz matrix $T_n\langle S, T \rangle$.
To this end, we provide a method to determine the existence of directed walks of a specific length between two vertices in $D(T_n\langle S \cup {s^\ast}; T \rangle)$ using a simpler digraph $D(T_n\langle s^\ast; \emptyset \rangle)/\mathbb{Z}_d$.

\begin{lemma}\label{lem:walk1}
Let $T_n\langle S; T \rangle$ be a Toeplitz matrix and $d= \gcd(S\cup T)$.
If there is a directed $(u,v)$-walk with $\ell$ $s^\ast$-arcs in the digraph of $T_n\langle S\cup \{ s^\ast \} ; T \rangle$ for some $s^\ast \in [n-1]$, then there is a directed $(u',v')$-walk $W$ of length $\ell$ in $ D(T_n\langle s^\ast; \emptyset \rangle)/\mathbb{Z}_d$ for some integers $u', v' \in \{1, 2, \ldots, d'\}$ with $u' \equiv u \pmod {d}$ and $v' \equiv v \pmod {d}$.
\end{lemma}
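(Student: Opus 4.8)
The plan is to read the walk off modulo $d$ and to exploit the fact that, after contracting by $\mathbb{Z}_d$, every arc coming from an element of $S$ or of $T$ degenerates into a loop, so that only the $s^\ast$-arcs carry any information. The conceptual point underpinning this is that $d=\gcd(S\cup T)$ divides every element of $S$ and of $T$; I would state this at the outset.

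First I would fix a directed $(u,v)$-walk $W'\colon u=w_0\to w_1\to\cdots\to w_m=v$ in $D(T_n\langle S\cup\{s^\ast\};T\rangle)$ having exactly $\ell$ arcs of the form $(w_{k-1},w_{k-1}+s^\ast)$, which I call $s^\ast$-arcs; every remaining arc is an $S$-arc $(w_{k-1},w_{k-1}+s)$ with $s\in S$ or a $T$-arc $(w_{k-1},w_{k-1}-t)$ with $t\in T$. For each $k$ let $\bar w_k$ be the unique integer in $\{1,\ldots,d\}$ with $\bar w_k\equiv w_k\pmod d$. Since $d\mid s$ for all $s\in S$ and $d\mid t$ for all $t\in T$, each $S$-arc and each $T$-arc leaves the residue unchanged, so $\bar w_k=\bar w_{k-1}$ along such an arc; along an $s^\ast$-arc one has instead $\bar w_k\equiv \bar w_{k-1}+s^\ast\pmod d$.

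Next I would let $0\le k_1<\cdots<k_\ell<m$ be the positions at which the $s^\ast$-arcs occur and set $x_0=\bar w_0$ and $x_j=\bar w_{k_j+1}$ for $1\le j\le\ell$. Because the residue is constant along the $S$- and $T$-arcs that lie between two consecutive $s^\ast$-arcs (and before the first, and after the last), we get $x_{j-1}=\bar w_{k_j}$ for each $j$, and $x_\ell=\bar w_m$. Thus $x_{j-1}$ and $x_j$ are the residues of the tail $w_{k_j}$ and head $w_{k_j}+s^\ast$ of the $j$-th $s^\ast$-arc; since $1\le w_{k_j},w_{k_j}+s^\ast\le n$, the pair $(w_{k_j},w_{k_j}+s^\ast)$ is an arc of $D(T_n\langle s^\ast;\emptyset\rangle)$, whence $(x_{j-1},x_j)$ is an arc of $D(T_n\langle s^\ast;\emptyset\rangle)/\mathbb{Z}_d$ by the definition of the contraction (this is still correct when $d\mid s^\ast$, in which case the arc is a loop). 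Therefore $W\colon x_0\to x_1\to\cdots\to x_\ell$ is a directed walk of length $\ell$ in $D(T_n\langle s^\ast;\emptyset\rangle)/\mathbb{Z}_d$; taking $u'=x_0=\bar w_0$ and $v'=x_\ell=\bar w_m$ we have $u',v'\in\{1,\ldots,d\}$, $u'\equiv u\pmod d$, and $v'\equiv v\pmod d$, as required.

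The argument is little more than careful bookkeeping, and I do not expect a genuine obstacle; the only things worth watching are the degenerate cases, namely $\ell=0$ (which forces $u\equiv v\pmod d$, with $W$ the trivial walk at $u'$) and $s^\ast$-arcs with $d\mid s^\ast$ (which contract to loops), both of which are permitted. If one wanted the cleaner description $D(T_n\langle s^\ast;\emptyset\rangle)/\mathbb{Z}_d=D(T_d\langle r;d-r\rangle)$ with $r\equiv s^\ast\pmod d$ one would need the extra hypotheses $d\nmid s^\ast$ and $s^\ast\le n-d$ of Proposition~\ref{prop:ddcycle}, but the statement as given requires no such assumptions, so I would avoid invoking it here.
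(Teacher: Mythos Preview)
Your proof is correct and follows essentially the same approach as the paper's. Both arguments rest on the observation that every arc coming from $S$ or $T$ preserves the residue class modulo $d$, so that after contracting only the $s^\ast$-arcs survive; the paper expresses this by decomposing the walk as $u_1\xrightarrow{W_1}v_1\xrightarrow{s^\ast}u_2\xrightarrow{W_2}\cdots$ with each $W_i$ free of $s^\ast$-arcs and noting $u_i\equiv v_i\pmod d$, while you track the residues $\bar w_k$ directly and index the $s^\ast$-arc positions by $k_1<\cdots<k_\ell$---these are merely notational variants of the same bookkeeping.
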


\begin{proof}
Let $D^\ast = T_n\langle S\cup \{s^\ast\};T\rangle$ and $D_d = D(T_n\langle s^\ast; \emptyset \rangle)/\mathbb{Z}_d$.
For any directed $(u_0,v_0)$-walk of length $l_0$ with no $s^\ast$-arc in $D^\ast$,
there are some nonnegative integers $a_i$ and $b_i$ such that
\[
v_0-u_0 = \sum_{i=1}^{k_1} a_is_i - \sum_{i=1}^{k_2} b_it_i.
\]
Since $d\mid s_i$ and $d\mid t_j$ for any $i \in \{1, 2, \ldots, k_1\}$ and $j \in \{1, 2, \ldots, k_2\}$, it follows that
\begin{equation*}
d \mid v_0-u_0.
\end{equation*}

Suppose that there is a directed $(u,v)$-walk $W$ of length $\ell$ in $T_n\langle S\cup \{ s^\ast \} ; T \rangle$.
If there is no $s^\ast$-arc in $W$, then $u \equiv v \pmod {d}$ and so there is a directed $(u',u')$-walk $u'$ where $u'$ is an integer in $\{1,2, \ldots, d\}$ with $u' \equiv u \pmod {d}$.

Now, suppose there is an $s^\ast$-arc in $W$.
Then
\[
W = u_1 \xrightarrow{W_1} v_1 \xrightarrow{s^\ast} u_2 \xrightarrow{W_2} v_2 \xrightarrow{s^\ast} \cdots \xrightarrow{s^\ast} u_{\ell+1} \xrightarrow{W_{\ell+1}} v_{\ell+1}
\]
for some directed walks $W_1, \ldots, W_{\ell+1}$ with no $s^\ast$-arcs and some positive integer $\ell$.
Since there is no $s^\ast$-arc in $W_i$, $v_i \equiv u_i \pmod {d}$ for each $i \in \{1, 2, \ldots, \ell+1\}$.
Therefore there are vertices $w_1, \ldots, w_{\ell+1}$ in $D_d$ satisfying $w_i \equiv u_i \equiv v_i \pmod d$. 
For any integer $i \in \{1, 2, \ldots, \ell\}$, since there is an $s^\ast$-arc $(v_i, u_{i+1})$ in $D^\ast$, there is an arc $(w_i, w_{i+1})$ in $F$.
Therefore there is a directed $(w_1, w_{\ell+1})$-walk of length $\ell$ in $D_d$.
\end{proof}

\begin{lemma}\label{lem:walk2}
Let $T_n\langle S; T \rangle$ be a walk-ensured Toeplitz matrix.
If there is a directed $(u',v')$-walk of length $\ell$ in $D(T_n\langle s^\ast; \emptyset \rangle)/\mathbb{Z}_d$ for $d= \gcd(S\cup T)$ and some $s^\ast \in [n-1]$,
then there is a directed $(u,v)$-walk with $\ell$ $s^\ast$-arcs in the digraph of $T_n\langle S\cup \{s^\ast\} ; T \rangle$ for every vertices $u$ and $v$ with $u \equiv u' \pmod {d}$ and $v \equiv v' \pmod{d}$.
\end{lemma}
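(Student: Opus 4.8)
The plan is to run the argument of Lemma~\ref{lem:walk1} backwards: given the directed $(u',v')$-walk of length $\ell$ in $D_d := D(T_n\langle s^\ast;\emptyset\rangle)/\mathbb{Z}_d$, I would lift each of its $\ell$ arcs to an honest $s^\ast$-arc, and then connect consecutive $s^\ast$-arcs by walks lying inside the digraph of $T_n\langle S;T\rangle$ — walks that exist precisely because this matrix is walk-ensured, by Proposition~\ref{cor:walk}. Concretely, write the walk in $D_d$ as $w_1 \to \cdots \to w_{\ell+1}$ with $w_1 = u'$ and $w_{\ell+1} = v'$. By the definition of the contraction, for each $i \in \{1,\dots,\ell\}$ there is an arc $(m_i, l_i)$ in the digraph of $T_n\langle s^\ast;\emptyset\rangle$ with $m_i \equiv w_i \pmod d$ and $l_i \equiv w_{i+1} \pmod d$; since every arc of that digraph has the form $(m, m+s^\ast)$ with $m, m+s^\ast \in [n]$, necessarily $l_i = m_i + s^\ast$, and $(m_i, m_i+s^\ast)$ is an $s^\ast$-arc of $D^\ast := D(T_n\langle S\cup\{s^\ast\};T\rangle)$.

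Now fix vertices $u,v$ with $u \equiv u' \pmod d$ and $v \equiv v' \pmod d$, and set $x_0 = u$ and $x_i = m_i + s^\ast$ for $1 \le i \le \ell$. Then $x_{i-1} \equiv m_i \pmod d$ for every $i \in \{1,\dots,\ell\}$ (indeed $x_0 = u \equiv u' = w_1 \equiv m_1$, and $x_{i-1} = m_{i-1}+s^\ast = l_{i-1} \equiv w_i \equiv m_i$ for $i \ge 2$), and $x_\ell = l_\ell \equiv w_{\ell+1} = v' \equiv v \pmod d$. Since $T_n\langle S;T\rangle$ is walk-ensured with $\gcd(S\cup T) = d$, Proposition~\ref{cor:walk} supplies a directed $(x_{i-1}, m_i)$-walk $W_i$ in $D(T_n\langle S;T\rangle)$ for each $i \in \{1,\dots,\ell\}$ and a directed $(x_\ell, v)$-walk $W_{\ell+1}$ in $D(T_n\langle S;T\rangle)$. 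Concatenating,
\[
W:\ u = x_0 \xrightarrow{W_1} m_1 \xrightarrow{s^\ast} x_1 \xrightarrow{W_2} m_2 \xrightarrow{s^\ast} \cdots \xrightarrow{s^\ast} x_\ell \xrightarrow{W_{\ell+1}} v
\]
is a directed $(u,v)$-walk in $D^\ast$. Each $W_i$ uses only arcs of $D(T_n\langle S;T\rangle)$, whose displacements lie in $S \cup (-T)$; since $s^\ast > 0$ and $s^\ast \notin S$, none of these is an $s^\ast$-arc, so $W$ contains exactly the $\ell$ $s^\ast$-arcs $(m_i, m_i+s^\ast)$, as required. (When $\ell = 0$ we have $u' = v'$, hence $u \equiv v \pmod d$, and $W_1$ alone — a $(u,v)$-walk furnished by Proposition~\ref{cor:walk} — does the job.)

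I do not expect a serious obstacle here: the statement is essentially a careful converse of Lemma~\ref{lem:walk1}, with Proposition~\ref{cor:walk} manufacturing the connecting walks that Lemma~\ref{lem:walk1} had extracted. The one point demanding care is the index bookkeeping, namely checking that the residues modulo $d$ at each splice point match so that Proposition~\ref{cor:walk} applies; this works out because the contraction $D_d$ tracks residues modulo $d$ and the chosen $s^\ast$-arcs shift those residues by $s^\ast$ in exactly the same way in $D^\ast$ as in $D_d$.
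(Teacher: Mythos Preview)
Your proof is correct and follows essentially the same approach as the paper's: lift each arc of the walk in $D_d$ to an actual $s^\ast$-arc in $D^\ast$ and stitch consecutive lifts together (and the ends to $u$ and $v$) using Proposition~\ref{cor:walk}, which applies because the splice points agree modulo $d$. One minor quibble: you assert ``$s^\ast \notin S$'' to conclude the connecting walks $W_i$ contain no $s^\ast$-arcs, but this is not among the hypotheses; the paper's own proof is equally informal on this count, and in the applications (e.g.\ Lemma~\ref{lem:walk3}) the standing assumption $d \nmid s^\ast$ does force $s^\ast \notin S$.
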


\begin{proof}
Let $D^\ast = T_n\langle S\cup \{s^\ast\};T\rangle$ and $D_d = D(T_n\langle s^\ast;\emptyset \rangle)/\mathbb{Z}_d$.
Suppose there is a directed $(u',v')$-walk $W'$ in $D_d$.
Then $W'$ has the following form \[W'=u_0 \rightarrow u_1 \rightarrow \cdots \rightarrow u_{\ell}\] for some nonnegative integer $\ell$ and some vertices $u_0, u_1, \ldots, u_{\ell}$ with $u_0 = u'$ and $u_\ell = v'$.
Take any $i \in \{0, \ldots, \ell-1\}$.
Since there is an arc $(u_i, u_{i+1})$ in $D_d$, there is an $s^\ast$-arc $(v_i, w_{i+1})$ in $D^\ast$ with $v_i \equiv u_i \pmod {d'}$ and $w_{i+1} \equiv u_{i+1} \pmod {d'}$.
By Proposition~\ref{cor:walk}, there is a directed $(u_i,v_i)$-walk and $(w_{i+1},u_{i+1})$-walk in $D^\ast$.
Thus there is a directed $(u_i, u_{i+1})$-walk in $D^\ast$.
Since $i$ was arbitrarily chosen, there is a directed $(u_0,u_{\ell})$-walk in $D^\ast$.
By Proposition~\ref{cor:walk}, there are a directed $(u,u_0)$-walk and $(u_{\ell},v)$-walk and so there is a directed $(u,v)$-walk with $\ell$ $s^\ast$-arcs in $D^\ast$.
\end{proof}

\section{Periods of Toeplitz matrices}\label{sec:pfthm2}

In this section, we provide the periods of Toeplitz matrices. 
Firstly, we compute the periods of walk-ensured Toeplitz matrices. 
Then we propose several methods to find the periods of Toeplitz matrices even if they are not walk-ensured. 

\begin{theorem}\label{thm:walk-ensuredpr}
Let $A=T_n\langle S; T \rangle$ be a walk-ensured Toeplitz matrix.
Then the period of $A$ is $\gcd(S+T)/\gcd(S\cup T)$.
Furthermore, if $\gcd(S+T) \le n$, then  the graph sequence $\{C^m(D(A))\}_{m=1}^\infty$ converges.
\end{theorem}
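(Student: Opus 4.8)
The plan is to establish the two assertions separately, using the characterization $P_i(A) = R_i(A)$ for $i \ge M$ that holds for walk-ensured matrices (Proposition~\ref{prop:walk-ensured}). Write $d^+ = \gcd(S+T)$ and $d = \gcd(S\cup T)$. For the period claim, first observe that $D(A^m)$ has an arc $(u,v)$ exactly when there is a directed $(u,v)$-walk of length $m$, i.e. when $v - u \in R_m(A) = P_m(A)$ for $m \ge M$; thus for $m \ge M$ the matrix $A^m$ is completely determined by the set $P_m(A)$, which records the residue $m s_1 \bmod d^+$ together with which elements of $\mathcal{I}_n$ lie in that residue class. By Lemma~\ref{lem:abcd}(a), $P_m(A) = P_{m + d^+/d}(A)$, so $A^m = A^{m + d^+/d}$ for all $m \ge M$; this shows the period divides $d^+/d$. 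For the reverse inequality, I would use Lemma~\ref{lem:abcd}(b): the sets $P_M(A), P_{M+1}(A), \ldots, P_{M + d^+/d - 1}(A)$ are pairwise disjoint and all nonempty (each contains, e.g., a representative of its residue class inside $\mathcal{I}_n$, since $d^+ \le$ something — one must check nonemptiness, which holds because $s_1 \in P_1(A)$ and the recursion of Lemma~\ref{lem:abcd}(c) propagates nonemptiness, or more simply because $0 \in P_{k_1+k_2}$-type arguments give elements). Hence no smaller positive integer $p < d^+/d$ can satisfy $A^m = A^{m+p}$ eventually, because that would force $P_m(A) = P_{m+p}(A)$ for large $m$, contradicting disjointness. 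Therefore the period is exactly $d^+/d$.

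For the convergence statement, assume additionally $d^+ \le n$. Here $C^m(D(A))$ presumably denotes the competition graph (or digraph) at step $m$, whose adjacency relation is given by $A^m (A^T)^m$: vertices $i,j$ are adjacent iff rows $i,j$ of $A^m$ share a common nonzero column, equivalently iff there is a vertex $w$ with directed $(i,w)$- and $(j,w)$-walks of length $m$. I would reduce this to a statement about $P_m(A)$: for $m \ge M$, the $(i,j)$-entry of $A^m(A^T)^m$ is $1$ iff there exists $w \in [n]$ with $w - i \in P_m(A)$ and $w - j \in P_m(A)$. Now I claim that for $d^+ \le n$ this condition becomes independent of $m$ (for $m$ large): because $d^+ \le n$, every residue class modulo $d^+$ is represented among $\{1,\ldots,n\}$ in a "full" way, so $P_m(A) = \{\ell \in \mathcal{I}_n : \ell \equiv m s_1 \pmod {d^+}\}$ is a complete arithmetic progression with common difference $d^+$ spanning all of $\mathcal{I}_n$; the existence of a common target $w$ then depends only on $i \equiv j \pmod{d^+}$ or more precisely on the relative position of $i$ and $j$ modulo $d^+$, not on the particular residue $m s_1 \bmod d^+$. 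Concretely, $w-i$ and $w-j$ lie in $P_m(A)$ simultaneously iff $i \equiv j \pmod{d}$ (using that $s_1$ generates, together with $d^+$, the subgroup $d\mathbb{Z}/d^+\mathbb{Z}$, mirroring the proof of Proposition~\ref{cor:walk}) and $\{i,j\}$ admits such a $w$ in range, a condition symmetric and $m$-free once $d^+ \le n$. Thus $A^m(A^T)^m$ stabilizes to a fixed matrix for all large $m$, and by the abstract's assertion this limit is $T_n\langle d^+, 2d^+, \ldots, \lfloor n/d^+\rfloor d^+\rangle$; I would verify this by checking that $i \sim j$ in the limiting competition graph iff $d^+ \mid i - j$, using the range condition $d^+ \le n$ to guarantee a valid common neighbor $w$ exists whenever $d^+ \mid i-j$.

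The key steps in order: (1) translate $A^m$ and $A^m(A^T)^m$ into statements about the sets $P_m(A)$ via $P_m = R_m$; (2) invoke Lemma~\ref{lem:abcd}(a) for periodicity and (b) for minimality, plus a short nonemptiness check; (3) for convergence, show $d^+ \le n$ forces $P_m(A)$ to be a full arithmetic progression across $\mathcal{I}_n$ so that the two-walk/common-target condition loses its $m$-dependence; (4) identify the stable value as the claimed Toeplitz matrix by a divisibility-plus-range argument.

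The main obstacle I anticipate is step (3): making precise why $d^+ \le n$ is exactly the threshold at which the common-neighbor condition for $i,j$ becomes $m$-independent. When $d^+ > n$ the progression $P_m(A)$ need not hit every residue-compatible value in range, so the set of achievable common targets genuinely shifts with $m s_1 \bmod d^+$, and the competition digraph can oscillate with period $d^+/d$ rather than converge; conversely when $d^+ \le n$ one must carefully count that for any $i,j$ with $d \mid i-j$ there is a $w \in [n]$ with both $w-i$ and $w-j$ in the (single, $m$-shifted) progression, which is a lattice-point-in-an-interval argument that needs the bound $d^+ \le n$ used twice. Handling the boundary/edge vertices (small or large $i$) in that count is the fiddly part, but it is elementary once the framework of steps (1)–(2) is in place.
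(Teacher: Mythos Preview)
Your period argument is essentially the paper's: reduce $A^m$ (for $m\ge M$) to the set $P_m(A)$ via walk-ensuredness, then read off the period $d^+/d$ from Lemma~\ref{lem:abcd}(a),(b). The nonemptiness you flag is immediate, since $d^+\le s_1+t_1\le 2n-2<|\mathcal I_n|$, so every residue class modulo $d^+$ meets $\mathcal I_n$.

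The convergence part has a genuine slip. You assert that ``$w-i$ and $w-j$ lie in $P_m(A)$ simultaneously iff $i\equiv j\pmod d$,'' citing the subgroup generated by $s_1$ in $\mathbb Z/d^+\mathbb Z$. That subgroup plays no role here: for a \emph{fixed} $m$, membership in $P_m(A)$ is the single congruence $\cdot\equiv m s_1\pmod{d^+}$, so $w-i,\,w-j\in P_m(A)$ forces $i\equiv j\pmod{d^+}$, not merely $\pmod d$. You repeat the same mis-modulus in your ``main obstacle'' paragraph (``for any $i,j$ with $d\mid i-j$''). You do land on the correct limit $T_n\langle d^+,2d^+,\ldots\rangle$ at the very end, so perhaps this is a persistent typo rather than a conceptual confusion, but as written the intermediate claim is false and would yield the wrong limiting graph.

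The obstacle you anticipate---a delicate lattice-point count with boundary cases---is also overstated. The paper's construction is one line: given $x\equiv y\pmod{d^+}$, take the unique $w\in[d^+]$ with $w\equiv x+m s_1\pmod{d^+}$; the hypothesis $d^+\le n$ puts $w\in[n]$, and then $w-x,\,w-y\in P_m(A)$ automatically. No separate edge-vertex analysis is needed.
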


\begin{proof}
Take any two vertices $u$ and $v$ in $D(A)$.
By Lemma~\ref{lem:cwcorr} and Proposition~\ref{lem:xiperiod}, if there is a directed $(u,v)$-walk of length $\ell$ in $D(A)$, then $v-u \in P_\ell(A)$.
Since $A$ is walk-ensured, $P_\ell(A) = R_\ell(A)$ for a sufficiently large integer $\ell$.
Therefore, if $v-u \in P_\ell(A)$, then there is a directed $(u,v)$-walk of length $\ell$.
Thus
\statement{st:walk}{
there is a directed $(u,v)$-walk of length $\ell$ if and only if $v-u \in P_\ell(A)$.}

Since $(u,v)$-entry of $A^\ell$ is $1$ if and only if there is a directed $(u,v)$-walk of length $\ell$, it is true  by the above observation that $(u,v)$-entry of $A^\ell$ is $1$ if and only if $v-u \in P_\ell(A)$.
Thus the period of $A$ is equal to the period of the sequence $\{P_i(A)\}_{i=1}^\infty$.
By Lemma~\ref{lem:abcd}(a) and (b), $\{P_i(A)\}_{i=1}^\infty$ has period $\gcd(S+T)/\gcd(S\cup T)$.
Hence the period of $A$ is $\gcd(S+T)/\gcd(S\cup T)$.

To show the ``furthermore" part, suppose that
\begin{equation}\label{eq:minsmint}
\gcd(S+T) \le n.
\end{equation}
Take vertices $x,y \in [n]$.
By the definition of $\ell$-step competition graph, $xy \in C^\ell (D(A))$ if and only if there is a vertex $z$ in $[n]$ such that there are a directed $(x,z)$-walk of length $\ell$ and a directed $(y,z)$-walk of length $\ell$ in $D(A)$.
By (S1), $xy \in C^\ell (D(A))$ if and only if $z-x \in P_\ell (A)$ and $z-y \in P_\ell(A)$.

Suppose $x \equiv y \pmod {d^+}$ where $d^+ = \gcd(S+T)$.
By the division algorithm, there exists some integer $w \in [d^+]$ such that $\ell s_1 + x \equiv w \pmod {d^+}$, which implies $w - x \in P_\ell(A)$.
By \eqref{eq:minsmint}, $w \in [d^+]\subseteq [n]$ and so $w$ is a vertex in $D(A)$.
Since $x \equiv y \pmod {d^+}$, $w-y \in P_\ell(A)$.
Therefore we have shown that if $x \equiv y \pmod {d^+}$, then $xy \in C^\ell(D(A))$.

By the definition of $P_\ell(A)$, if $xy \in C^\ell (D(A))$, then $x \equiv y \pmod {d^+}$.
Thus $xy \in C^\ell(D(A))$ if and only if $x \equiv y \pmod {d^+}$.
Eventually, we have shown that, for a sufficiently large $\ell$, the adjacency matrix of $C^\ell(D(A))$ is $T_n \langle d^+, 2d^+, \ldots, \lfloor n/d^+ \rfloor d^+\rangle$.
Hence the graph sequence $\{C^m(D(A))\}_{m=1}^\infty$ converges to $T_n \langle d^+, 2d^+, \ldots, \lfloor n/d^+ \rfloor d^+\rangle$.
\end{proof}

Given square matrices $A= (a_{ij})$ and $B=(b_{ij})$ of order $n$, we write $A \le B$ if $a_{ij} \le b_{ij}$ for every $i, j \in [n]$.
We note that if $A \le B$, then $D(A)$ is a subdigraph of $D(B)$ and so every directed walk in $D(A)$ is also a directed walk in $D(B)$.
Therefore,
\statement{st:ambm}{if $A \le B$, then $A^m \le B^m$. }

\begin{theorem}\label{cor:gcdsame}
Let $A=T_n\langle S; T \rangle$ be a walk-ensured Toeplitz matrix.
Assume that $S^\ast$ and $T^\ast$ are subsets of $[n-1]$ such that $S \subseteq S^\ast$ and $T \subseteq T^\ast$.
If $\gcd(S + T)=\gcd(S^\ast+T^\ast)$, then $T_n \langle S^\ast ; T^\ast \rangle$ has period $\gcd(S +T) / \gcd(S \cup T)$.
\end{theorem}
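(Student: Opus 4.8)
The plan is to show that $B:=T_n\langle S^\ast;T^\ast\rangle$ has exactly the same powers as $A$ from some exponent on, and then to read off the period of $A$ from Theorem~\ref{thm:walk-ensuredpr}. Throughout, write $d^+=\gcd(S+T)$ and $d=\gcd(S\cup T)$.

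First I would collect two easy observations. Since $S\subseteq S^\ast$ and $T\subseteq T^\ast$ we have $A\le B$, hence $A^\ell\le B^\ell$ for all $\ell\ge 1$ by \eqref{stm:st:ambm}. Next, the hypothesis $\gcd(S^\ast+T^\ast)=d^+$ forces every element of $S^\ast$ to be congruent to $\min S$ modulo $d^+$: for $s'\in S^\ast$ and any $t\in T\subseteq T^\ast$, both $s'+t$ and $(\min S)+t$ lie in $S^\ast+T^\ast$, so $d^+\mid s'-\min S$. (Symmetrically every element of $T^\ast$ is congruent to $\min T$ modulo $d^+$, and since $d\mid d^+$ and $d$ divides $\min S$ and $\min T$, one even gets $\gcd(S^\ast\cup T^\ast)=d$; this consistency remark will not actually be needed.) Consequently $\min S^\ast\equiv\min S\pmod{d^+}$, and since $P_\ell(\cdot)$ is defined for both $A$ and $B$ using the same modulus $d^+$ (as $\gcd(S^\ast+T^\ast)=d^+$), comparing the definitions gives $P_\ell(B)=P_\ell(A)$ for every $\ell\ge 1$.

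The main step is to prove that, for all $\ell$ beyond the walk-ensuredness threshold $M$ of $A$ (so that $P_\ell(A)=R_\ell(A)$, cf.\ Proposition~\ref{prop:walk-ensured}), the $(u,v)$-entry of $B^\ell$ equals $1$ if and only if $v-u\in P_\ell(A)$. For the ``only if'' direction, a directed $(u,v)$-walk of length $\ell$ in $D(B)$ yields, by Lemma~\ref{lem:cwcorr} and Proposition~\ref{lem:xiperiod} applied to the enlarged pair $(S^\ast,T^\ast)$, the congruence $v-u\equiv\ell\min S^\ast\pmod{d^+}$, so that $v-u\in P_\ell(B)=P_\ell(A)$. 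For the ``if'' direction, \eqref{stm:st:walk} for $A$ provides a directed $(u,v)$-walk of length $\ell$ in $D(A)$, which is also a walk in $D(B)$ since $A\le B$ (equivalently, use $A^\ell\le B^\ell$). Combining this with \eqref{stm:st:walk} for $A$ (which says the $(u,v)$-entry of $A^\ell$ is $1$ iff $v-u\in P_\ell(A)$), we conclude $A^\ell=B^\ell$ for all sufficiently large $\ell$; hence $A$ and $B$ have the same matrix period, and by Theorem~\ref{thm:walk-ensuredpr} this period is $d^+/d=\gcd(S+T)/\gcd(S\cup T)$.

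I expect the congruence bookkeeping in the ``only if'' direction to be the one delicate point: Lemma~\ref{lem:cwcorr} and Proposition~\ref{lem:xiperiod} must be invoked for $(S^\ast,T^\ast)$ rather than for $(S,T)$, and it is exactly the hypothesis $\gcd(S^\ast+T^\ast)=\gcd(S+T)$ that keeps the governing modulus equal to $d^+$ and the governing residue equal to $\ell\min S\bmod d^+$. This is also why one cannot shortcut the proof by applying Theorem~\ref{thm:walk-ensuredpr} to $B$ directly, since $B$ itself need not be walk-ensured. Everything else is routine manipulation of the sets $P_i$ and $R_i$ already set up in Section~\ref{sec:pre}.
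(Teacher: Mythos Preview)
Your argument is correct and follows essentially the same route as the paper's proof: both show $A^\ell=B^\ell$ for all large $\ell$ by combining $A\le B$ with the inclusion $R_\ell(B)\subseteq P_\ell(B)=P_\ell(A)=R_\ell(A)$, and then invoke Theorem~\ref{thm:walk-ensuredpr}. In fact you are slightly more careful than the paper at one point: the paper asserts $P_i(A)=P_i(B)$ directly from $\gcd(S^\ast+T^\ast)=\gcd(S+T)$, whereas you correctly note that one must also check $\min S^\ast\equiv\min S\pmod{d^+}$, since the definition of $P_i$ uses the minimum of the first index set.
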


\begin{proof}
Let $B = T_n \langle S^\ast ; T^\ast \rangle$.
Since $\gcd(S^\ast + T^\ast)=\gcd(S+T)$,
\[
P_i(A) = P_i(B)
\]
 for any positive integer $i$.
By the definition of walk-ensured Toeplitz matrix, there exists some positive integer $M$ such that $P_m(A)=R_m(A)$ for any integer $m \ge M$.
By (S2), since $A \le B$, for any positive integer $m$, $A^m \le B^m$ holds.

Suppose $m \ge M$ and there is a directed $(u,v)$-walk of length $m$ in $D(B)$.
Then $v-u \in P_m(B)$ by Lemma~\ref{lem:cwcorr} and Proposition~\ref{lem:xiperiod}.
Since $P_m(A) = P_m(B)$ and $P_m(A) = R_m(A)$, there exists a directed $(u,v)$-walk of length $m$ in $D(A)$.
Thus, if there exists a directed $(u,v)$-walk of length $m$ in $D(B)$, then there exists a directed $(u,v)$-walk of length $m$ in $D(A)$.
Therefore $A^m \ge B^m$ for every $m \ge M$.
Thus $A^m = B^m$ for every $m \ge M$.
Hence $A$ and $B$ have the same period.
By Theorem~\ref{thm:walk-ensuredpr}, $B$ has period $\gcd(S+T)/\gcd(S\cup T)$.
\end{proof}

\begin{theorem}\label{thm:snksrc}
Let $A = T_n \langle S ; T \rangle$ be a walk-ensured Toeplitz matrix and $B$ be an $n\times n$ Boolean matrix with $A \le B$.
If $D(B-A)/\mathbb{Z}_d$ for $d = \gcd(S \cup T)$ has a source or a sink, then the periods of $A$ and $B$ are the same.
\end{theorem}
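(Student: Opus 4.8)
The plan is to show that adding the extra arcs of $D(B-A)$ to $D(A)$ does not change which pairs $(u,v)$ admit a directed walk of each sufficiently large length, so that $A^m = B^m$ for all large $m$, which by Theorem~\ref{thm:walk-ensuredpr} gives the common period $\gcd(S+T)/\gcd(S\cup T)$. Since $A \le B$, by (S2) we already have $A^m \le B^m$ for every $m$, so the only thing to prove is the reverse inequality $A^m \ge B^m$ for all large $m$; equivalently, every directed $(u,v)$-walk of large length $m$ in $D(B)$ can be ``simulated'' by a directed $(u,v)$-walk of the same length $m$ in $D(A)$.

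First I would take a directed $(u,v)$-walk $W$ of length $m$ in $D(B)$ and break it at the arcs that come from $B-A$, writing $W = u \xrightarrow{W_0} x_1 \to y_1 \xrightarrow{W_1} x_2 \to y_2 \to \cdots \to y_k \xrightarrow{W_k} v$, where each $W_i$ uses only arcs of $D(A)$ and each $x_i \to y_i$ is an arc of $D(B-A)$. The key structural input is the hypothesis that $D(B-A)/\mathbb{Z}_d$ has a source or a sink: WLOG say it has a sink, i.e.\ a residue class $c \bmod d$ that is never the tail of any arc of $D(B-A)$. Combined with the fact that every $D(A)$-walk moves $u$ to $v$ only by amounts divisible by $d = \gcd(S\cup T)$, this says that each arc $x_i \to y_i$ of $B-A$ satisfies $x_i \not\equiv c$, so $y_i \equiv x_i + (y_i - x_i)$ can be anything — but more importantly, I would use the sink to argue that the $B-A$-arcs cannot ``cycle'': the residue classes of the $x_i$ along $W$, being outside the sink class, are constrained, and in the contracted digraph $D(B-A)/\mathbb{Z}_d$ a sink forces every walk to have length bounded by $d$. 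Hence $k \le d$ (or a similar absolute bound), so $W$ contains at most a bounded number of $B-A$-arcs.

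Next, I would replace each offending arc $x_i \to y_i$ by a detour inside $D(A)$. Since $A$ is walk-ensured, by Proposition~\ref{cor:walk} any two vertices in the same residue class mod $d$ are joined by a directed $D(A)$-walk, and by the walk-ensured property (Proposition~\ref{prop:walk-ensured}, $P_j(A) = R_j(A)$ for $j \ge M$) we have fine control over the \emph{lengths} of available $D(A)$-walks: for $j$ large, a $(p,q)$-walk of length $j$ in $D(A)$ exists precisely when $q - p \in P_j(A)$, and the sets $P_j(A)$ are eventually periodic in $j$ with period $d^+/d$ and cover all residues mod $d$. So for each $i$, I can find a $D(A)$-walk from $x_i$ to some vertex congruent to $y_i$ mod $d$, and then patch to $y_i$, choosing the total length to match; the bounded number $k$ of substitutions means only a bounded total ``length budget'' is consumed, and for $m \ge M'$ (some threshold depending only on $n$, $M$, and $d$) the length bookkeeping closes up, using periodicity of $P_j(A)$ to adjust each detour's length by multiples of $d^+/d$. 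This yields a directed $(u,v)$-walk of length exactly $m$ in $D(A)$, proving $B^m \le A^m$ for $m \ge M'$, hence $A^m = B^m$ and the periods agree.

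The main obstacle is the length-matching step: it is not enough that detours \emph{exist} in $D(A)$; their lengths must sum, together with the lengths of the untouched segments $W_i$, to exactly $m$. Here the sink/source hypothesis does double duty — it bounds the number of $B-A$-arcs (so the correction is controlled) and it pins down the residues involved — while the walk-ensured hypothesis supplies the arithmetic flexibility via the eventual periodicity and coset structure of $\{P_j(A)\}$ in Lemma~\ref{lem:abcd}. I expect the cleanest route is to handle the sink case and the source case by a symmetric argument (reversing all arcs, i.e.\ passing to transposes, turns a sink into a source), and to phrase the length adjustment as: pick detours of some baseline lengths, then absorb the deficit into one long detour whose length can be increased in steps of $d^+/d$ by Lemma~\ref{lem:abcd}(a), which is legitimate once $m$ exceeds the relevant threshold.
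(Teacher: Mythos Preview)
Your overall strategy is to prove $A^m=B^m$ for all large $m$, but that target is generally \emph{false} under the hypotheses, so the plan cannot succeed. In $D(A)$ every arc changes the vertex by a multiple of $d=\gcd(S\cup T)$, hence $(A^m)_{uv}=0$ whenever $u\not\equiv v\pmod d$. The extra arcs in $D(B-A)$, however, may cross residue classes, so $B^m$ can have $1$'s in positions where $A^m$ is forced to vanish; you will not get $B^m\le A^m$. Two specific steps in your outline break accordingly: (i) the claim that a sink in $D(B-A)/\mathbb{Z}_d$ bounds the number $k$ of $B{-}A$ arcs on a walk by $d$ is wrong, since a sink does not preclude directed cycles among the other residue classes, and your induced walk $[x_1]\to[x_2]\to\cdots$ can loop indefinitely there; (ii) the ``detour'' replacement of $x_i\to y_i$ by a $D(A)$-walk is impossible precisely when it matters, because Proposition~\ref{cor:walk} only supplies $D(A)$-walks between vertices in the \emph{same} class mod $d$, whereas the interesting $B{-}A$ arcs connect different classes.

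The paper's proof avoids all of this by not comparing the full matrices. It fixes a vertex $u$ whose residue class is a sink of $D(B-A)/\mathbb{Z}_d$ and observes that every closed $(u,u)$-walk in $D(B)$ must lie entirely in $D(A)$: the initial $D(A)$-segment keeps the residue equal to $[u]$, so the first $B{-}A$ arc, if any, would have tail in class $[u]$ and produce an outgoing arc from the sink in the contraction, a contradiction. Hence $(A^m)_{uu}=(B^m)_{uu}$ for all $m$, and since $A$ is walk-ensured this diagonal sequence has period $p=d^+/d$. That gives $p\mid q$ for $q$ the period of $B$. The argument finishes not by equality of powers but by a squeeze: $0\in R_{\ell p}(A)$ for large $\ell$ yields $B^m\le B^{m+\ell p}$, and iterating up to $B^{m+k\ell p}=B^m$ forces $B^m=B^{m+\ell p}$, so $q\mid p$. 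You should restructure along these lines: exploit the sink to control a \emph{single} diagonal entry rather than the whole matrix, and use monotonicity plus eventual periodicity instead of trying to rebuild every $D(B)$-walk inside $D(A)$.
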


\begin{proof}
Let $p$ and $q$ be the periods of $A$ and $B$, respectively.
Suppose that $D(B-A)/\mathbb{Z}_d$ has a sink $u$ and $W$ is a directed $(u,u)$-walk in $D(B)$ in $D(B)$.
We claim that $W$ consists of arcs in $D(A)$.
Suppose, to the contrary, that $xy$ be the first arc on $W$ that is not in $D(A)$.
Since there is a directed $(u,x)$-walk in $D(A)$, $d \mid x-u$.
Let $w$ be a vertex in $D(B-A)/\mathbb{Z}_d$ with $w \equiv y \pmod{d}$.
Then, since $x \rightarrow y$ in $D(B-A)$, $u \rightarrow w$ in  $D(B-A)/\mathbb{Z}_d$, which contradicts to the fact that $u$ is a sink.
Therefore every directed walk from $u$ in $D(B)$ consists of arcs in $D(A)$.
Thus the $(u,u)$-entry of $A^m$ is greater than equal to the $(u,u)$-entry of $B^m$.
By the hypothesis that $A \le B$ and (S2), $A^m \le B^m$ and so the $(u,u)$-entry of $A^m$ and the $(u,u)$-entry of $B^m$ are the same.
Therefore
\statement{st:prsm}{
the period of $\{(A^m)_{uu}\}_{m=1}^\infty$ and the period of $\{(B^m)_{uu}\}_{m=1}^\infty$ are the same.
}

We note that $0 \in P_m(A)$ if and only if $m \equiv 0 \pmod{d^+/d}$ where $d^+ = \gcd(S+T)$ for any positive integer $m$.
Since $A$ is walk-ensured, there exists an integer $M$ such that $P_m(A) = R_m(A)$ for any integer $m \ge M$.
Therefore, for every integer $m \ge M$ with $m \equiv 0 \pmod{d^+/d}$, we have $(A^m)_{uu} =1$.
On the other hand, by Lemma~\ref{lem:cwcorr} and Proposition~\ref{lem:xiperiod}, if $(A^m)_{uu} = 1$ for some positive integer $m$, then $0 \in P_m(A)$ and so $m \equiv 0 \pmod{d^+/d}$.
Thus the period of $\{(A^m)_{uu}\}_{m=1}^\infty$ is $d^+/d$ which equals $p$ by Theorem~\ref{thm:walk-ensuredpr}.
Since $q$ is divided by the period of $\{(B^m)_{uu}\}_{m=1}^\infty$,
$p \mid q$ by (S3).
Therefore $q = kp$ for some positive integer $k$.
Thus, for sufficiently large $m$ and any nonnegative integer $i$,
\begin{equation}\label{eq:kp}
B^m = B^{m+ikp}.
\end{equation}

By Theorem~\ref{lem:pqr}, there exists a $(v,v)$-directed walk of length $\ell p$ in $D(A)$ for any vertex $v$ and every integer $\ell$ with $\ell p \ge M$ for sufficiently large $M$.
Then, since $A \le B$, there exists a $(v,v)$-directed walk of length $\ell p$ in $D(B)$ for any vertex $v$ and every integer $\ell$ with $\ell p\ge M$.
Therefore $B^j \le B^{j+\ell p}$ for any positive integer $j$ and every integer $\ell$ with $\ell p \ge M$.
Then
\[
B^m \le B^{m+\ell p} \le B^{m+2\ell p} \le \cdots \le B^{m+k\ell p} = B^m,
\]
by \eqref{eq:kp}.
Therefore $B^m = B^{m+\ell p}$ for sufficiently large $m$ and any integer $\ell$ with $\ell p \ge M$.
Thus $B$ also has the period $p$.
\end{proof}

\begin{corollary}\label{thm:main3}
Let $T_n\langle S; T \rangle$ be a walk-ensured Toeplitz matrix.
Then, for any integer $s^\ast$ satisfying
\[n-\gcd(S\cup T)<s^\ast < n, \]
the periods of $T_n\langle S; T \rangle$ and $T_n\langle S\cup \{s^\ast\};T \rangle$ are the same.
\end{corollary}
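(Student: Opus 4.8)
The plan is to derive the corollary as a direct application of Theorem~\ref{thm:snksrc}, the only work being to exhibit a sink in the appropriate contracted digraph. Write $A = T_n\langle S; T\rangle$, $d = \gcd(S\cup T)$, and $B = T_n\langle S\cup\{s^\ast\}; T\rangle$. If $s^\ast \in S$ then $B = A$ and there is nothing to prove, so I would assume $s^\ast \notin S$; then $A \le B$, and it remains to check that $D(B-A)/\mathbb{Z}_d$ has a source or a sink.

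The first step is to identify $D(B-A)$ explicitly. The arcs of $D(B)$ not already present in $D(A)$ are exactly those $(i,j)$ with $j-i = s^\ast$: indeed $j-i = s^\ast \notin S$, while $i - j = -s^\ast < 0$ cannot lie in $T$ since $T \subseteq [n-1]$ consists of positive integers. Hence $D(B-A)$ has arc set $\{(i, i+s^\ast) \mid 1 \le i \le n-s^\ast\}$, i.e. $D(B-A) = D(T_n\langle s^\ast; \emptyset\rangle)$.

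The second step is to locate a sink of $D(B-A)/\mathbb{Z}_d$. The hypothesis $n - \gcd(S\cup T) < s^\ast < n$ is equivalent to $1 \le n - s^\ast \le d - 1$. Every arc of $D(B-A)$ has its tail in $\{1, \ldots, n-s^\ast\}$, and since $n - s^\ast < d$ these tails represent nonzero residue classes modulo $d$. Consequently no arc of $D(B-A)$ has a tail divisible by $d$, so the vertex $d$ of $D(B-A)/\mathbb{Z}_d$ (the class of $0$ modulo $d$) has out-degree zero, i.e. it is a sink. Theorem~\ref{thm:snksrc} then immediately gives that $A$ and $B$ have the same period.

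I do not expect a real obstacle here; the two points needing care are (i) confirming that no $s^\ast$-arc already belongs to $D(A)$, which uses $s^\ast \notin S$ together with the positivity of the elements of $T$, and (ii) checking that the numerical condition $n - d < s^\ast$ is precisely what forces the residue class $0$ to miss the set of arc-tails $\{1, \ldots, n-s^\ast\}$, so that the sink vertex $d$ genuinely exists (when $d = 1$ the hypothesis on $s^\ast$ is vacuous, so this case needs no separate argument). Everything else is bookkeeping.
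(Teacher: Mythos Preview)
Your proposal is correct and follows essentially the same approach as the paper: identify $B-A$ with $T_n\langle s^\ast;\emptyset\rangle$, show the contracted digraph $D(B-A)/\mathbb{Z}_d$ has a sink, and invoke Theorem~\ref{thm:snksrc}. The only cosmetic difference is that the paper argues by counting (at most $n-s^\ast\le d-1$ arcs among $d$ vertices forces a sink) whereas you exhibit the sink $d$ explicitly; your version is in fact slightly more careful, since you also dispose of the trivial case $s^\ast\in S$.
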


\begin{proof}
Let $d = \gcd(S\cup T)$.
Suppose that $n- \gcd(S\cup T) < s^\ast <n$.
Then $n-s^\ast < d$.
We note that there are exactly $n-s^\ast$ $s^\ast$-arcs in $D(T_n \langle S\cup \{s^\ast\} ; T \rangle)$:
\[(1, 1+s^\ast), (2, 2+s^\ast), \ldots, (n-s^\ast, n). \]
We also observe that \[T_n \langle S\cup \{s^\ast\};T\rangle - T_n\langle S;T \rangle = T_n\langle s^\ast ; \emptyset \rangle.\]
Since $n-s^\ast < d$, there are at most $d-1$ arcs in $D(T_n\langle s^\ast;\emptyset \rangle)/\mathbb{Z}_d$ by definition.
Therefore there is a sink in $D(T_n\langle s^\ast;\emptyset \rangle)/\mathbb{Z}_d$.
Hence $T_n\langle S;T \rangle$ and $T_n\langle S\cup \{s^\ast\};T \rangle$ have the same period by Theorem~\ref{thm:snksrc}.
\end{proof}

\section{Walk-ensured Toeplitz matrices}\label{sec:pfthm1}

In the previous section, we provided a way to compute the period of walk-ensured Toeplitz matrices. 
Yet, it is not easy to identify walk-ensured Toeplitz matrices. 
In this section, we present two sufficient conditions for Toeplitz matrices being walk-ensured which can be easily verified as true or false but significantly relaxes ($\star$) as follows.

\begin{theorem}\label{thm:st}
For any integers $s,t$ with $s+t \le n$ and $\gcd(s,t)=1$, if $s \in S \subseteq [n-1]$ and $t \in T \subseteq [n-1]$, then $T_n\langle S;T \rangle$ is a walk-ensured Toeplitz matrix.
\end{theorem}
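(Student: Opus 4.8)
The plan is to reduce the general claim to the special case already handled in the previous paper, namely Theorem~\ref{lem:pqr} (equivalently Theorem~\ref{thm:minmax}), which says that a Toeplitz matrix satisfying ($\star$) is walk-ensured, and then to use the ``absorption'' machinery of Section~\ref{sec:cont} (Lemmas~\ref{lem:walk1} and~\ref{lem:walk2}) together with Corollary~\ref{thm:main3} to add back the remaining parameters of $S$ and $T$ without destroying walk-ensuredness. More precisely: since $s+t \le n$ and $\gcd(s,t) = 1$, the two-parameter matrix $B_0 := T_n\langle s; t\rangle$ satisfies $\max\{s\} + \min\{t\} = s+t \le n$ and $\min\{s\}+\max\{t\} = s+t\le n$, so ($\star$) holds for $B_0$; hence $B_0$ is walk-ensured by Theorem~\ref{thm:minmax}. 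Moreover $d^+(B_0) = \gcd(s,t) = 1$ and $d(B_0) = \gcd(\{s,t\}) = \gcd(s,t) = 1$ as well (note $s,t$ are coprime so their gcd is $1$, but $s\cup t$ may still have gcd $1$ trivially), so $P_i(B_0) = \mathcal{I}_n$ for all large $i$ — that is, $B_0$ is ``eventually complete'' in the sense that every difference is realized by walks of every sufficiently long length.

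Next I would add the remaining parameters one at a time. Write $S = \{s\}\cup S'$ and $T=\{t\}\cup T'$; I want to show $T_n\langle \{s\}\cup S'; \{t\}\cup T'\rangle$ is walk-ensured. The key point is that once the base matrix $B_0$ is walk-ensured with $d^+(B_0)=1$, Corollary~\ref{thm:main3} and Theorem~\ref{cor:gcdsame} let me enlarge the index sets freely: since $\gcd$ of the enlarged sets is still $1$ (it divides $1$), Theorem~\ref{cor:gcdsame} already gives that the enlarged matrix has period $\gcd(S+T)/\gcd(S\cup T) = 1/1 = 1$ — but I actually need the stronger conclusion that it is \emph{walk-ensured}, i.e. $P_i = R_i$ for large $i$. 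For that I use the argument template in the proof of Theorem~\ref{cor:gcdsame}: with $B$ the enlarged matrix and $A = B_0$, we have $A \le B$, $P_i(A) = P_i(B) = \mathcal{I}_n$ eventually, and $P_i(A) = R_i(A)$ eventually; since any $(u,v)$-walk of length $m$ in $D(B)$ forces $v-u \in P_m(B) = P_m(A) = R_m(A)$, there is also such a walk in $D(A)\subseteq D(B)$, so $R_m(B) \supseteq R_m(A) = P_m(A) = P_m(B)$, giving $R_m(B) = P_m(B)$ for all large $m$. Hence $B$ is walk-ensured by Proposition~\ref{prop:walk-ensured}. This handles \emph{all} enlargements in one stroke, not just one parameter at a time, because the crucial input — $\gcd(S^\ast+T^\ast)=\gcd(S+T)$ — is automatic when $d^+ = 1$.

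So the skeleton is: (1) check $B_0 = T_n\langle s;t\rangle$ satisfies ($\star$), hence is walk-ensured with $d^+ = 1$; (2) note $P_i(B_0) = \mathcal{I}_n$ for all $i$ large; (3) apply the comparison argument from the proof of Theorem~\ref{cor:gcdsame}, with $B = T_n\langle S;T\rangle$, to conclude $R_i(B) = P_i(B)$ eventually, i.e. $B$ is walk-ensured. The main obstacle I anticipate is step~(2)/(3): I must be careful that ``$\gcd(S+T) = 1$'' really does follow from $\gcd(s,t)=1$ with $s\in S$, $t\in T$ — it does, since $s+t$ is among the generators and dividing $\gcd(S+T)$; combined with Proposition~\ref{lem:xiperiod} this pins down $P_i$ modulo $1$, i.e. all of $\mathcal{I}_n$. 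One subtlety worth double-checking is whether $d = \gcd(S\cup T)$ for the \emph{big} matrix could exceed $1$: it cannot, because $s, t \in S\cup T$ and $\gcd(s,t) = 1$. With $d=1$ there is no room for the congruence obstruction, and the comparison argument goes through verbatim. The only genuinely non-routine ingredient is recognizing that the previously-proved Theorem~\ref{cor:gcdsame}'s proof already contains the walk-ensuredness transfer as an intermediate step — so no new combinatorial work on walks is needed beyond invoking Theorems~\ref{thm:minmax} and~\ref{cor:gcdsame}.
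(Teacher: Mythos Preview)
Your proposal contains a genuine error that breaks the argument. You assert that $d^+(B_0) = \gcd(s,t) = 1$, but by definition $d^+ = \gcd(S+T)$, and for $B_0 = T_n\langle s; t\rangle$ the set $S+T$ is the singleton $\{s+t\}$, so $d^+(B_0) = s+t$, not $\gcd(s,t)$. (The quantity equal to $\gcd(s,t)$ is $d(B_0) = \gcd(S\cup T)$.) Since $s,t \ge 1$, we have $d^+(B_0) = s+t \ge 2$, so $P_i(B_0)$ is a single residue class modulo $s+t$ in $\mathcal{I}_n$, not all of $\mathcal{I}_n$.

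This is fatal for your step (3). The comparison argument from the proof of Theorem~\ref{cor:gcdsame} requires $P_i(A) = P_i(B)$, i.e.\ $\gcd(S_A+T_A) = \gcd(S_B+T_B)$. But for the full matrix $B = T_n\langle S;T\rangle$ we only know that $d^+(B)$ \emph{divides} $d^+(B_0) = s+t$; it can be strictly smaller. For instance, with $n=10$, $s=2$, $t=3$, $S=\{2,3\}$, $T=\{3\}$, one has $d^+(B_0)=5$ while $d^+(B)=\gcd(5,6)=1$. Then $P_i(B_0)$ is a class modulo $5$ whereas $P_i(B) = \mathcal{I}_n$, so $R_i(B)\supseteq R_i(B_0) = P_i(B_0) \subsetneq P_i(B)$ gives no information. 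In short, the transfer of walk-ensuredness via Theorem~\ref{cor:gcdsame} only works when enlarging the index sets does not shrink $d^+$, and there is no reason that should hold here.

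The paper's route is different: it first proves the technical Theorem~\ref{thm:1}, which says that adding any single parameter $s^\ast \le n - d$ to $S$ or $T$ preserves walk-ensuredness (with no hypothesis on how $d^+$ changes). Since $\gcd(s,t)=1$ forces $d=1$ throughout the process, every element of $S\cup T$ satisfies $s^\ast \le n-1 = n-d$, and one can iterate Theorem~\ref{thm:1} starting from $T_n\langle s;t\rangle$. The substantive work---handling the case where $d^+$ drops when a parameter is added---is exactly the content of the two cases in the proof of Theorem~\ref{thm:1}, which your proposal bypasses.
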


\begin{theorem}\label{thm:main1}
Let $S$ and $T$ be subsets of $[n-1]$ for some positive integer $n$ satisfying $s_1+t_1 \le n$, $\max \{s_{k_1},t_{k_2}\} \le n- \gcd(s_1,t_1)$,
Then $T_n\langle S;T\rangle$ is a walk-ensured Toeplitz matrix.
\end{theorem}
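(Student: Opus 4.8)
The plan is to build $T_n\langle S;T\rangle$ up from the Toeplitz matrix $T_n\langle s_1;t_1\rangle$ determined by its two minimal entries, adjoining the remaining members of $S$ and of $T$ one at a time, and to invoke a parameter-addition principle (Theorem~\ref{thm:1}) guaranteeing that walk-ensuredness is inherited at each step. First I would settle the base case: for the one-element index sets $\{s_1\}$ and $\{t_1\}$, condition $(\star)$ reduces to the single inequality $s_1+t_1\le n$, which is part of the hypothesis, so $T_n\langle s_1;t_1\rangle$ is walk-ensured by Theorem~\ref{thm:minmax}. (If $\gcd(s_1,t_1)=1$ one could instead quote Theorem~\ref{thm:st} directly, so the real content lies in the case $\gcd(s_1,t_1)>1$.)

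Next, put $d=\gcd(s_1,t_1)$, enumerate $S\setminus\{s_1\}=\{a_1,\dots,a_p\}$ and $T\setminus\{t_1\}=\{b_1,\dots,b_q\}$, and form the chain $A_0=T_n\langle s_1;t_1\rangle$, then $A_1,\dots,A_{p+q}=T_n\langle S;T\rangle$, where $A_j$ is obtained from $A_{j-1}$ by adjoining one new parameter $r_j$ (first $a_1,\dots,a_p$ to the left index set, then $b_1,\dots,b_q$ to the right). The key arithmetic feeding Theorem~\ref{thm:1} is that the two index sets of $A_{j-1}$ together still contain $s_1$ and $t_1$, so their gcd, say $d_{j-1}$, divides $d$; hence
\[
r_j \ \le\ \max\{s_{k_1},t_{k_2}\}\ \le\ n-d\ \le\ n-d_{j-1}.
\]
Thus each adjunction adds a parameter bounded by $n$ minus the current gcd, which is exactly the regime covered by Theorem~\ref{thm:1}; applying it along the chain, starting from the walk-ensured $A_0$, yields that $T_n\langle S;T\rangle$ is walk-ensured.

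The hard part is Theorem~\ref{thm:1} itself, namely that if $T_n\langle S';T'\rangle$ is walk-ensured and $s^\ast\le n-\gcd(S'\cup T')$, then $T_n\langle S'\cup\{s^\ast\};T'\rangle$ is walk-ensured. For this I would use Proposition~\ref{prop:walk-ensured} to reduce to showing $P_i=R_i$ for the enlarged matrix for all large $i$, and then, given a displacement $v-u\in P_i$, build a length-$i$ walk from $u$ to $v$ that alternates blocks of arcs inside $D(T_n\langle S';T'\rangle)$ with $s^\ast$-arcs. By Lemmas~\ref{lem:walk1} and \ref{lem:walk2}, the admissible patterns of $s^\ast$-arcs in such a walk correspond precisely to directed walks in the simple digraph $D(T_n\langle s^\ast;\emptyset\rangle)/\mathbb{Z}_{d'}$ with $d'=\gcd(S'\cup T')$, which by Corollary~\ref{cor:ddcycle} is a disjoint union of directed cycles when $d'\nmid s^\ast$; when $d'\mid s^\ast$, adjoining $s^\ast$ leaves $\gcd(S'\cup T')$ unchanged and that contracted digraph is the digraph with a loop at every vertex. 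In either case the bound $s^\ast\le n-d'$ is exactly what keeps the contracted digraph free of sources, sinks, and isolated vertices. The interior blocks are then filled in using Proposition~\ref{cor:walk} together with walk-ensuredness of $T_n\langle S';T'\rangle$, which supply $D(T_n\langle S';T'\rangle)$-walks of any prescribed, sufficiently large length between vertices sharing a residue class modulo $d'$. I expect the genuinely delicate part to be the bookkeeping that controls (i) how $\gcd((S'\cup\{s^\ast\})+T')$ relates to $\gcd(S'+T')$, so that the residue classes $P_i$ of the enlarged matrix are matched correctly, and (ii) that every intermediate vertex of the assembled walk stays inside $[n]$; both are exactly where the bound $s^\ast\le n-d'$ — that is, globally, $\max\{s_{k_1},t_{k_2}\}\le n-\gcd(s_1,t_1)$ — is indispensable.
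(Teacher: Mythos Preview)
Your proposal is correct and follows essentially the same route as the paper: start from $T_n\langle s_1;t_1\rangle$, which is walk-ensured by Theorem~\ref{thm:minmax} since $s_1+t_1\le n$, and then repeatedly apply Theorem~\ref{thm:1} to adjoin the remaining parameters, using that the running gcd always divides $\gcd(s_1,t_1)$ so the bound $r_j\le n-\gcd(s_1,t_1)\le n-d_{j-1}$ holds at every step. Your additional sketch of Theorem~\ref{thm:1} (the two-case split according to whether $d'\mid s^\ast$, the use of the contracted digraph via Lemmas~\ref{lem:walk1}--\ref{lem:walk2} and Corollary~\ref{cor:ddcycle}, and filling the interior blocks with Proposition~\ref{cor:walk}) also matches the paper's argument, though strictly speaking it is not needed here since Theorem~\ref{thm:1} is proved separately.
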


\begin{example}
For any positive integers $k, n$ with $2k+1 \le n$, $T_n\langle k, n-k;k+1, n-k-1  \rangle$ is a walk-enusred Toeplitz matrix by Theorem~\ref{thm:st} (or Theorem~\ref{thm:main1}), even if the condition ($\star$) is violated.
In addition to this, any number of Toeplitz matrices that do not satisfy ($\star$) but can be identified as a walk-ensured Toeplitz matrix by Theorem~\ref{thm:st} or \ref{thm:main1} can be constructed.
\end{example}

As a matter of fact, the above two theorems can be easily derived from the following theorem. 

\begin{theorem}\label{thm:1}
Let $T_n \langle S;T \rangle$ be a walk-ensured Toeplitz matrix with $d = \gcd (S \cup T)$.
Then for any positive integer $s^* \le n-d$, $T_n\langle S \cup \{s^\ast\}; T \rangle$ and $T_n\langle S ; T \cup \{s^\ast\} \rangle$ are also walk-ensured Toeplitz matrices.
\end{theorem}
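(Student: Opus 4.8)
The plan is to add the parameter $s^\ast$ to $S$ (the argument for adding it to $T$ is symmetric, using the transpose, since $D(T_n\langle S;T\cup\{s^\ast\}\rangle)$ is the reverse of $D(T_n\langle T;S\cup\{s^\ast\}\rangle)$). Write $A=T_n\langle S;T\rangle$, $A^\ast=T_n\langle S\cup\{s^\ast\};T\rangle$, $d=\gcd(S\cup T)$, and let $d^{++}=\gcd((S\cup\{s^\ast\})+T)$ be the new ``$d^+$'' invariant. By Proposition~\ref{prop:walk-ensured} it suffices to produce a positive integer $M$ with $P_i(A^\ast)=R_i(A^\ast)$ for all $i\ge M$; since $R_i(A^\ast)\subseteq P_i(A^\ast)$ always holds by \eqref{eq:pqr}, the real content is the reverse inclusion: every $\ell\in\mathcal I_n$ with $\ell\equiv i s_1\pmod{d^{++}}$ must be realized by a directed $(u,v)$-walk of length $i$ in $D(A^\ast)$ for every $u,v$ with $v-u=\ell$, once $i$ is large.

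First I would fix $u,v$ with $v-u=\ell\in P_i(A^\ast)$. The key structural fact is that $d^{++}\mid d$, so $d=e\,d^{++}$ for some positive integer $e$; note also that $s^\ast\equiv r\pmod d$ for the remainder $r$ of $s^\ast$ modulo $d$, and $d\nmid s^\ast$ is the only interesting case (if $d\mid s^\ast$ then $d^{++}=d^+$ and the claim is immediate from walk-ensuredness of $A$ together with $R$-sets only growing). Using Corollary~\ref{cor:ddcycle}, the digraph $D(T_n\langle s^\ast;\emptyset\rangle)/\mathbb Z_d$ is a disjoint union of directed cycles on the congruence classes mod $d$; because $\gcd(s^\ast,d^{++}\cdot\!\text{(stuff)})$ lets the $s^\ast$-arcs move between all residue classes mod $d^{++}$ that are reachable, I would show that for a suitable number $\ell$ of $s^\ast$-arcs one can land in the correct class mod $d$. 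Concretely: choose a nonnegative integer $a$ (the number of $s^\ast$-arcs) and residue $v-u-a s^\ast$; one needs $d\mid v-u-as^\ast$, i.e. $a r\equiv v-u\pmod d$, which is solvable for $a$ precisely when $\gcd(r,d)\mid v-u$, and $\gcd(r,d)=\gcd(s^\ast,d)$ divides $d^{++}$ — wait, rather, one should track this through $d^{++}=\gcd(d,s_1+t)$-type identities — and then $v-u-as^\ast$ is divisible by $d$, so by Proposition~\ref{cor:walk} applied to the walk-ensured matrix $A$ each of the resulting ``gaps'' between consecutive $s^\ast$-arcs can be bridged by $A$-walks. Assembling: a directed $(u,v)$-walk in $D(A^\ast)$ consisting of $a$ $s^\ast$-arcs interleaved with $A$-walks. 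This is exactly the construction packaged in Lemma~\ref{lem:walk2}: an $(u',v')$-walk of length $a$ in $D(T_n\langle s^\ast;\emptyset\rangle)/\mathbb Z_d$ lifts to an $(u,v)$-walk with $a$ $s^\ast$-arcs in $D(A^\ast)$.

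So the heart of the proof is: (1) verify that for all large $i$ and every $\ell\in P_i(A^\ast)$ there exist a valid count $a$ of $s^\ast$-arcs and a decomposition $i=a+\sum(\text{lengths of }A\text{-sub-walks})$ with each sub-walk length large enough to invoke walk-ensuredness of $A$ — here a standard Chicken McNugget/Schur-type argument gives that all sufficiently large integers in the correct residue class mod $d^{++}$ are so representable, using that the cycle lengths in $D(T_n\langle s^\ast;\emptyset\rangle)/\mathbb Z_d$ together with the period $d^+/d$ of $\{P_j(A)\}_j$ generate the right subgroup; (2) check the congruence bookkeeping, namely that $\ell\equiv i s_1\pmod{d^{++}}$ is equivalent to the solvability of the arc-count equation modulo $d$ combined with length $i$ — this is where Proposition~\ref{lem:xiperiod} is used, applied to $A^\ast$, to see that $P_i(A^\ast)$ is precisely the set of achievable residues. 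I expect step (1), the representability/``all sufficiently large'' argument that simultaneously controls the arc count mod $d$, the total length $i$, and the divisibility of each intermediate gap by $d$, to be the main obstacle; once one knows such a decomposition exists, Lemma~\ref{lem:walk2} (for the $s^\ast$-arc portions) and Proposition~\ref{cor:walk} (for the connecting $A$-walks) do the rest mechanically, and a uniform $M$ is obtained since $[n]$ is finite so only finitely many pairs $(u,v)$ need be handled.
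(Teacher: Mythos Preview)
Your overall strategy for the case $d\nmid s^\ast$---use the cycle structure of $D(T_n\langle s^\ast;\emptyset\rangle)/\mathbb Z_d$ from Corollary~\ref{cor:ddcycle} to pick a number of $s^\ast$-arcs, lift via Lemma~\ref{lem:walk2}, and fill the gaps by walk-ensuredness of $A$---is essentially the paper's approach, packaged there as Lemma~\ref{lem:walk3} followed by a padding step with a closed walk at the terminus. Two claims in your write-up, however, are wrong, and one of them hides the hardest part of the proof.

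First, the ``key structural fact'' $d^{++}\mid d$ is false. With $S=\{4\}$, $T=\{8\}$, $n=20$ one has $d=4$, $d^+=12$; taking $s^\ast=10$ gives $d^{++}=\gcd(12,18)=6\nmid 4$. What is true, and what actually drives the solvability of $ar\equiv v-u\pmod d$, is $\gcd(d,s^\ast)=\gcd(S^\ast\cup T)\mid d^{++}$ together with $\gcd(d,s^\ast)\mid s_1$.

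Second, and more seriously, your dismissal of the case $d\mid s^\ast$ as ``immediate'' rests on the assertion that then $d^{++}=d^+$, which is also false. In the same example take $s^\ast=8$ (so $4\mid s^\ast$): then $d^{++}=\gcd(12,16)=4\neq 12=d^+$. Consequently $P_i(A^\ast)$ is strictly larger than $P_i(A)$, and the inclusion $R_i(A)\subseteq R_i(A^\ast)$ does \emph{not} yield $P_i(A^\ast)\subseteq R_i(A^\ast)$. The paper handles this case with a separate and substantial argument: since $d\mid s^\ast$, Corollary~\ref{cor:ddcycle} is unavailable (its hypothesis is exactly $d\nmid s^\ast$), and one instead chooses the number $c\in[0,d^+-1]$ of $s^\ast$-arcs via a B\'ezout identity $d^{++}=ad^++b(s^\ast-s_1)$ so that the residual difference lands in the correct $P$-class of $A$, explicitly builds intermediate $A$-walks $W_0,\ldots,W_c$ of controlled lengths between consecutive $s^\ast$-arcs, and verifies the total length bookkeeping so that the final segment still has length at least the threshold $M$ for $A$. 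This case is where most of the work lies, and your proposal skips it entirely.
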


\begin{proof}[Proof of Theorem~\ref{thm:st}]
Let $T_n \langle S ; T \rangle$ be a Toeplitz matrix with elements $s \in S$ and $t\in T$ satisfying $s+t \le n$ and $\gcd(s,t)=1$.
By Theorem~\ref{thm:minmax}, $T_n \langle s; t \rangle$ is a walk-ensured Toeplitz matrix.
Since $\gcd( s, t) = 1$, $d = \gcd (S^\ast \cup T^\ast) = 1$ for any $S^\ast \subseteq S$ and $T^\ast \subseteq T$ containing $s \in S^\ast$ and $t \in T^\ast$.
Thus any element in $S\cup T$ is less than or equal to $n-d$.
Therefore we may apply Theorem~\ref{thm:1} repeatedly until we expand $T_n \langle s; t \rangle$ to $T_n \langle S; T \rangle$ keeping the ``walk-ensured'' property.
\end{proof}

\begin{proof}[Proof of Theorem~\ref{thm:main1}]
By Theorem~\ref{thm:minmax}, $T_n \langle s_1 ; t_1 \rangle$ is a walk-ensured Toeplitz matrix.
Since $\max (S \cup T) \le n- \gcd(\min S, \min T)$, for any $i \in [k_1-1]$ and $j \in [k_2-1]$, $s_{i+1} \le n-\gcd(s_1, \ldots, s_i, t_1, \ldots, t_j)$ and $t_{j+1} \le n-\gcd(s_1, \ldots, s_i, t_1, \ldots, t_j)$.
Thus we may apply Theorem~\ref{thm:1} repeatedly to have a walk-ensured Toeplitz matrix $T_n \langle S; T \rangle$.
\end{proof}

The rest of this section is devoted to proving Theorem~\ref{thm:1}. 
By the symmetry of $T_n\langle S; T \rangle$, it is sufficient to show $T_n \langle S \cup \{s^\ast\}; T \rangle$ is walk-ensured Toeplitz matrix for any positive integer $s^\ast \le n-d$ to prove Theorem~\ref{thm:1}.

The following lemma provides a way of computing $\gcd(S^\ast \cup T)$ and $\gcd(S^\ast+T)$ in terms of $\gcd(S \cup T)$ and $\gcd(S+T)$ where $S^\ast = S \cup \{s^\ast\}$.

\begin{lemma}\label{lem:dchange}
For nonempty subsets $S$ and $T$ of $[n]$ and $s^\ast \in [n]$, let $d = \gcd(S\cup T)$, $d^+ = \gcd(S+T)$, and $S^\ast = S \cup \{s^\ast\}$.
Then \[\gcd(S^\ast \cup T) =\gcd(d, s^\ast -s)\quad \text{and}\quad  \gcd(S^\ast + T) = \gcd(d^+, s^\ast-s) \] for any $s \in S$.
\end{lemma}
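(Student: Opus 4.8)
\textbf{Proof proposal for Lemma~\ref{lem:dchange}.}

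The plan is to establish both identities by the same elementary number-theoretic argument, exploiting that $S^\ast = S \cup \{s^\ast\}$ differs from $S$ only by the single new element $s^\ast$. For the first identity, set $d = \gcd(S \cup T)$ and fix any $s \in S$. Since $d = \gcd(S \cup T)$ divides every element of $S \cup T$, in particular $d \mid s$, so $\gcd(S^\ast \cup T)$ divides $\gcd((S \cup T) \cup \{s^\ast\})$, and conversely. The key observation is that $\gcd(S^\ast \cup T) = \gcd\bigl(\gcd(S \cup T),\, s^\ast\bigr) = \gcd(d, s^\ast)$ by associativity of $\gcd$. It then remains to check $\gcd(d, s^\ast) = \gcd(d, s^\ast - s)$: this holds because $d \mid s$, so $s^\ast$ and $s^\ast - s$ are congruent modulo $d$, hence any common divisor of $d$ and $s^\ast$ also divides $s^\ast - s$ and vice versa.

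For the second identity, write $d^+ = \gcd(S + T) = \gcd(s' + t \mid s' \in S,\ t \in T)$. Then $\gcd(S^\ast + T)$ is the $\gcd$ of all $s' + t$ with $s' \in S^\ast$ and $t \in T$, which splits into the part coming from $s' \in S$ — contributing exactly $d^+$ — and the part coming from $s' = s^\ast$, contributing $\gcd(s^\ast + t \mid t \in T)$. So $\gcd(S^\ast + T) = \gcd\bigl(d^+,\ \gcd(s^\ast + t \mid t \in T)\bigr)$. The main point is now to reduce $\gcd(d^+, \gcd(s^\ast + t \mid t \in T))$ to $\gcd(d^+, s^\ast - s)$ for a fixed $s \in S$. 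First, $d^+ \mid s + t$ for every $t \in T$, so modulo $d^+$ we have $s^\ast + t \equiv (s^\ast - s) + (s + t) \equiv s^\ast - s \pmod{d^+}$, uniformly in $t$; hence $\gcd\bigl(d^+, s^\ast + t\bigr) = \gcd(d^+, s^\ast - s)$ for each individual $t \in T$, and taking $\gcd$ over $t \in T$ and then with $d^+$ collapses everything to $\gcd(d^+, s^\ast - s)$.

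I do not expect a serious obstacle here; the only thing to be careful about is bookkeeping with the associativity/commutativity of iterated $\gcd$ over the (finite, nonempty) sets $S$, $T$, and the reduction modulo $d$ (resp. $d^+$) using the fact that $d \mid s$ (resp. $d^+ \mid s+t$). One should also note that the statement is asserted "for any $s \in S$," which is automatic from the above since the right-hand sides $\gcd(d, s^\ast - s)$ and $\gcd(d^+, s^\ast - s)$ turn out to be independent of the choice of $s \in S$: indeed if $s, s' \in S$ then $d \mid s - s'$ and $d^+ \mid s - s'$ (the latter because $d^+ \mid (s+t) - (s'+t)$ for any $t \in T$), so $s^\ast - s \equiv s^\ast - s'$ modulo both $d$ and $d^+$. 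This last remark could be stated first to streamline the write-up, or simply absorbed into the two computations above.
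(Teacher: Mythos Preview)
Your proof is correct and follows essentially the same approach as the paper's: both arguments use associativity of $\gcd$ to reduce $\gcd(S^\ast \cup T)$ to $\gcd(d, s^\ast)$ and $\gcd(S^\ast + T)$ to $\gcd(d^+, \gcd(s^\ast + t \mid t \in T))$, and then invoke $d \mid s$ and $d^+ \mid s+t$ to pass to $\gcd(d, s^\ast - s)$ and $\gcd(d^+, s^\ast - s)$, respectively. Your additional remark on the independence of the choice of $s \in S$ is a nice touch that the paper leaves implicit.
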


\begin{proof}
We note that $S^\ast \cup T = (S\cup T) \cup \{s^\ast\}$.
Then $\gcd(S^\ast \cup T) = \gcd( d , s^\ast)$.
Since $d \mid s$, we have $\gcd (d, s^\ast) = \gcd (d, s^\ast - s)$.
Therefore $\gcd (S^\ast \cup T) = \gcd(d, s^\ast - s)$.

Since $S^\ast + T = (S+T) \cup \{s^\ast + t \mid t \in T\}$,
$\gcd (S^\ast + T) = \gcd(d^+ , \gcd(s^\ast + t \mid t\in T))$.
Since $d^+ \mid s+t$ for any $t \in T$,
\begin{align*}
\gcd(d^+ , \gcd(s^\ast + t \mid t\in T)) &= \gcd(d^+, \gcd(s^\ast + t - (s+t) \mid t\in T))\\ &= \gcd(d^+, s^\ast -s).
\end{align*}
Therefore $\gcd(S^\ast+T) = \gcd(d^+, s^\ast -s)$.
\end{proof}

\begin{lemma}\label{lem:walk3}
Let $T_n \langle S; T \rangle$ be a walk-ensured Toeplitz matrix.
Assume that $u$ and $v$ are vertices in $D(T_n\langle S\cup \{s^\ast \}; T \rangle$ satisfying $v-u \equiv cs^\ast + \alpha s_1 \pmod {d^+}$ for some integers $c \ge 0$ and $\alpha$ where  $d^+ = \gcd(S+T)$, $s^\ast \le n-d$ and $d \nmid s^\ast$ for $d = \gcd(S\cup T)$.
Then there is a directed $(u,v)$-walk of length $\alpha + (d^+/d)m$ in $D(T_n\langle S\cup \{s^\ast \}; T \rangle$ for some positive integer $m$.
\end{lemma}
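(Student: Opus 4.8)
The plan is to construct the walk in three stages: route its $s^\ast$-arcs through the contracted digraph $D_d:=D(T_n\langle s^\ast;\emptyset\rangle)/\mathbb{Z}_d$, lift that route back to $D^\ast:=D(T_n\langle S\cup\{s^\ast\};T\rangle)$ by Lemma~\ref{lem:walk2}, and finally correct the length using closed walks supplied by the walk-ensured core $D(T_n\langle S;T\rangle)$.

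First, set $d'=\gcd(d,s^\ast)$. Since $d\nmid s^\ast$ and $s^\ast\le n-d$, Proposition~\ref{prop:ddcycle} identifies $D_d$ with $D(T_d\langle r;d-r\rangle)$, where $r\in\{1,\dots,d-1\}$ is the remainder of $s^\ast$ modulo $d$, and Corollary~\ref{cor:ddcycle} says this is a disjoint union of directed cycles, each of length $d/d'$, along which a single arc advances a vertex by $r\equiv s^\ast\pmod d$. Because $d\mid s_1$ and $d\mid d^+$, reducing $v-u\equiv cs^\ast+\alpha s_1\pmod{d^+}$ modulo $d$ gives $v-u\equiv cs^\ast\pmod d$; hence, writing $u',v'\in[d]$ for the residues of $u,v$ modulo $d$, the vertex $v'$ lies on the same cycle as $u'$ and $D_d$ contains a $(u',v')$-walk of every length $c+t(d/d')$ with $t\ge 0$ an integer. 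By Lemma~\ref{lem:walk2}, $D^\ast$ therefore contains, for each such admissible count $c'':=c+t(d/d')$, a directed $(u,v)$-walk $W_{c''}$ with exactly $c''$ arcs of type $s^\ast$.

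Next I compute the length of $W_{c''}$ modulo $d^+/d$. Splitting $W_{c''}$ into its $s^\ast$-arcs and the maximal sub-walks that lie inside $D(T_n\langle S;T\rangle)$, and applying Lemma~\ref{lem:cwcorr} and Proposition~\ref{lem:xiperiod} to those sub-walks, one obtains $v-u\equiv(|W_{c''}|-c'')s_1+c''s^\ast\pmod{d^+}$. Since $c''\equiv c\pmod{d/d'}$ we have $d\mid v-u-c''s^\ast$, so there is an integer $\alpha''$, unique modulo $d^+/d$, with $v-u\equiv c''s^\ast+\alpha'' s_1\pmod{d^+}$; comparing the two congruences and using $\gcd(s_1,d^+)=d$ forces $|W_{c''}|\equiv\alpha''+c''\pmod{d^+/d}$.

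Finally, I invoke walk-ensuredness. As noted in the proof of Theorem~\ref{thm:snksrc}, $0\in P_j(T_n\langle S;T\rangle)$ precisely when $(d^+/d)\mid j$, and since $P_j=R_j$ for all large $j$ the digraph $D(T_n\langle S;T\rangle)\subseteq D^\ast$ admits a closed $(u,u)$-walk of every sufficiently large length divisible by $d^+/d$; prepending one of these to $W_{c''}$ adds an arbitrarily large multiple of $d^+/d$ to the length while changing neither the endpoints nor the number of $s^\ast$-arcs. It remains only to pick $t$ so that $\alpha''+c''\equiv\alpha\pmod{d^+/d}$ and then to take $m$ large. Inserting the dependence of $\alpha''$ on $c''$ from the previous paragraph, this requirement collapses to a single linear congruence for $t$, and showing that it is solvable is the crux of the proof; this is where the hypothesis $d\nmid s^\ast$ is really used, because it is what makes the cycles of $D_d$ nontrivial and hence $c''$ genuinely free modulo $d/d'$. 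I expect this solvability step to be the main obstacle, the remaining bookkeeping being routine given Proposition~\ref{prop:ddcycle}, Corollary~\ref{cor:ddcycle}, Lemma~\ref{lem:walk2}, Lemma~\ref{lem:cwcorr}, and Proposition~\ref{lem:xiperiod}.
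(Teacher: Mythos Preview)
Your overall architecture---route $c$ steps through $D_d$, lift via Lemma~\ref{lem:walk2} to get a $(u,v)$-walk in $D^\ast$ with a prescribed number of $s^\ast$-arcs, then read off the length modulo $d^+/d$ using Lemma~\ref{lem:cwcorr} and Proposition~\ref{lem:xiperiod}---is exactly the paper's strategy. The difference is that the paper does this once, with exactly $c$ $s^\ast$-arcs, and stops; you instead introduce a free parameter $t$, aim to solve a congruence in $t$, and then explicitly flag that solvability as an unresolved ``main obstacle.'' That gap is genuine: you have not finished the proof.

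More to the point, the congruence is a detour you should not need. In the paper's proof the hypothesis is used in the form $v-u\equiv c(s^\ast-s_1)+\alpha s_1\pmod{d^+}$ (equivalently $v-u\equiv cs^\ast+(\alpha-c)s_1$); this is also exactly the form produced in the application in \S\ref{sec:pfthm1}. With that reading and $t=0$, your own computation gives $\alpha''=\alpha-c$ and hence $|W_c|\equiv\alpha''+c=\alpha\pmod{d^+/d}$ immediately---no congruence to solve, and the closed $(u,u)$-walks you prepend are only needed to make $m$ positive. Under the hypothesis exactly as printed, your congruence $t\bigl((s^\ast-s_1)/d'\bigr)\equiv c\cdot e\pmod{d^+/d}$ (after clearing units, with $e=s_1/d$) is \emph{not} always solvable; e.g.\ with $S=T=\{6\}$, $s^\ast=2$ one gets $d=6$, $d^+=12$, $d'=2$, and the left side is always even while $c$ may be odd. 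So the ``obstacle'' you isolate is not a missing trick but an artifact of a discrepancy between the displayed hypothesis and the one actually used in the paper's argument. Once you adopt the intended form, drop $t$ entirely and your write-up collapses to the paper's proof.
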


\begin{proof}
Let $A = T_n\langle S;T$ and $A^\ast = T_n\langle S\cup \{s^\ast \}; T \rangle$.
Since $s^\ast \le n-d$ and $d\nmid s^\ast$ by hypothesis, $D(T_n\langle s^\ast;\emptyset \rangle)/\mathbb{Z}_d$ is a disjoint union of directed cycles by Corollary~\ref{cor:ddcycle}.
Then there exists a directed walk of length $c$ starting from $u$.
Therefore, by Lemma~\ref{lem:walk2}, there exists a directed walk $W_0$ with $c$ $s^\ast$-arcs starting from $u'$ in $D(A^\ast)$ where $u'$ is a vertex with $u' \equiv u \pmod{d}$.
Let $v'$ be the terminus of $W_0$.
Since $u \equiv u' \pmod{d}$, there exists a directed $(u,u')$-walk $W'$ in $D(A)$ by Proposition~\ref{cor:walk}.
Since $D(A)$ is a subdigraph of $D(A^\ast)$, $W'$ is a directed $(u,u')$-walk in $D(A^\ast)$.
Since $u \equiv u' \pmod d$ and $v'-u' \equiv cs^\ast \pmod d$, $v'-u \equiv cs^\ast \pmod d$.
Since $p \equiv cs^\ast \pmod d$,
\begin{align*}
v-v' &= (v-u)-(v'-u)\\
&\equiv cs^\ast - cs^\ast \pmod d \\
&\equiv 0 \pmod d.
\end{align*}
Therefore there exists a directed $(v',v)$-walk $W''$ in $D(A)$ by Proposition~\ref{cor:walk}.
Since $D(A)$ is a subdigraph of $D(A^\ast)$, $W''$ is a directed $(v',v)$-walk in $D(A^\ast)$.
Thus
\[
u \xrightarrow{W'}u' \xrightarrow{W_0} v' \xrightarrow{W''} v
\]
is a directed $(u,v)$-walk with $c$ $s^\ast$-arcs in $D(A^\ast)$.
We denote this directed $(u,v)$-walk by $W$.

Let $\ell$ be the length of $W$, $a_i$ be the number of $s_i$-arcs in $W$, $b_j$ be the number of $t_j$-arcs in $W$ for each $i \in [k_1]$ and $j \in [k_2]$.
Then
\[
v-u = cs^\ast + \sum_{i=1}^{k_1} a_is_i - \sum_{i=1}^{k_2} b_it_i
\]
and $\ell = \sum a_i + \sum b_i + c$.
Since $v-u \equiv c(s^\ast-s_1) + \alpha s_1 \pmod {d^+}$, we have
\begin{align*}
cs^\ast +  \sum_{i=1}^{k_1} a_is_i - \sum_{i=1}^{k_2} b_it_i &= v-u \\
&= p \\
&\equiv cs^\ast + (\alpha -c)s_1.
\end{align*}
Therefore $\sum_{i=1}^{k_1} a_is_i - \sum_{i=1}^{k_2} b_it_i \equiv (\alpha -c)s_1 \pmod {d^+}$.
By Proposition~\ref{lem:xiperiod}, $\sum_{i=1}^{k_1} a_is_i - \sum_{i=1}^{k_2} b_it_i \equiv (\sum a_i + \sum b_i)s_1 \pmod {d^+}$.
Since $\ell = \sum a_i + \sum b_i + c$,
\begin{align*}
(\alpha -c)s_1 &\equiv \sum_{i=1}^{k_1} a_is_i - \sum_{i=1}^{k_2} b_it_i \pmod{d^+} \\
&\equiv (\ell-c) s_1 \pmod{d^+}.
\end{align*}
Then, by Lemma~\ref{lem:abcd},
\begin{equation}\label{eq:div}
\ell - \alpha  = (d^+/d)m
\end{equation}
 for some integer $m$.
\end{proof}

We will prove Theorem~\ref{thm:1} by considering the following two cases:
(i) $d \nmid s^\ast$; (ii) $d\mid s^\ast$.

\subsection{The case $d\nmid s^\ast$}

Let $A = T_n\langle S;T \rangle$, $S^\ast = S \cup \{s^\ast\}$,  $A^\ast = T_n\langle S^\ast ; T \rangle$, and $d^+ = \gcd(S+T)$.
Since $d^\ast = \gcd(s^\ast-s_1, d^+)$ by Lemma~\ref{lem:dchange}, there exist some integers $a$ and $b$ such that $d^\ast = (s^\ast-s_1)a + d^+b$ by B\'{e}zout's Identity.
Fix a positive integer $\alpha$ and take an element $p \in P_\alpha(A^\ast)$ and vertex $v$ in $D(A^\ast)$ with $v-p \in V(D(A^\ast))$.
Then $p \equiv \alpha s_1 \pmod {d^\ast}$ and so $p-\alpha s_1 = kd^\ast$ for some integer $k$.
Therefore $p-\alpha s_1 = kd^\ast = k((s^\ast-s_1)a+ d^+b)$ and so
\[
p\equiv ka(s^\ast-s_1) + \alpha s_1 \pmod{d^+}.
\]
By the division algorithm, there exist some integers $c$ and $q$ such that $ka = qd^+ + c$ and $0 \le c < d^+$.
Thus we have
\[
p \equiv c(s^\ast-s_1)+\alpha s_1 \pmod{d^+}.
\]
Therefore, by Lemma~\ref{lem:walk3},
\statement{wpv}{there exists a directed walk $W_{p,v}$ of length $\alpha + (d^+/d)m_{p,v}$ for some integer $m_{p,v}$. }

Let $M$ be a positive integer such that $P_i(A) = R_i(A)$ for any integer $i \ge M$ and
\[
M^\ast = {\rm max} \{ \alpha + (d^+ / d ) m_{p,v}+M \mid \alpha \in [d^+/d], p \in P_\alpha (A^\ast), v\in V(D(A^\ast)) \text{ with } v-p \in V(D(A^\ast))\}.
\]
Take any positive integer $i \ge M^\ast$ and $p \in P_i(A^\ast)$.
By the division algorithm, there is a positive integer $\alpha \in [d^+/d]$ satisfying $i \equiv \alpha \pmod {d^+/d}$.
Then, by Lemma~\ref{lem:abcd}(a), $P_i(A^\ast) = P_\alpha(A^\ast)$ and so $p \in P_\alpha (A^\ast)$.

To show $p \in R_i(A^\ast)$, it is sufficient to show that there is a directed $(x,y)$-walk of length $i$ for any vertices $x,y$ in $V(D(A^\ast))$ such that $y-x = p$.
Let $y, x$ be vertices in $V(D(A^\ast))$ such that $y-x = p$.
By (S4), there is a directed $(x,y)$-walk $W_{p,x}$ of length $\alpha + (d^+/d)m_{p,x}=:\ell$ for some integer $m_{p,x}$.
Since $i \equiv \alpha \pmod {d^+/d}$, we have $i \equiv \ell \pmod {d^+/d}$.
Then, by Lemma~\ref{lem:abcd}(a), $P_0 (A) = P_{i-\ell}(A)$ and so $0 \in P_{i-\ell}(A)$.
By the definitions of $M^\ast$ and $\ell$, $M^\ast \ge \ell + M$ and so
\begin{align*}
i- \ell &\ge i- (M^\ast - M)\\
&\ge M^\ast - (M^\ast - M)\\
&= M.
\end{align*}
Thus, by the choice of $M$, $P_{i-\ell}(A) = R_{i-\ell}(A)$.
Therefore $0 \in R_{i-\ell}(A)$ and so there exists a directed $(y,y)$-walk $W^\ast$ of length $(i-\ell)$ (note that $y-y= 0$).
Finally, we have a directed $(x,y)$-walk
\[
x \xrightarrow{W_{p,x}} y \xrightarrow{W^\ast} y
\]
of length $i$.
Since $x$ and $y$ were selected arbitrarily among the pairs of vertices with difference $p$, we have $p \in R_i(A^\ast)$.
Since $p$ was arbitrarily chosen, $P_i(A^\ast) \subseteq R_i(A^\ast)$ and so $P_i(A^\ast) = R_i(A^\ast)$.
Therefore $P_i(A^\ast) =R_i(A^\ast)$ for any integer $i \ge M^\ast$.
Hence $A^\ast$ is a walk-ensured Toeplitz matrix.

\subsection{The case $d\mid s^\ast$}

Let $s^\ast$ be a positive integer such that $s^\ast \le n-d$.
Let $A=T_n \langle S;T \rangle$, $S^\ast = S \cup \{s^\ast\}$, $A^\ast = T_n\langle S^\ast ; T \rangle$, $d^+ = \gcd (S+T)$, and $d^\ast = \gcd (S^\ast+T)$.
By Lemma~\ref{lem:dchange},
\[
d^\ast = \gcd (d^+, s^\ast - s_1) \quad \text{and} \quad
\gcd (S^\ast \cup T) = \gcd (d, s^\ast - s_1)
\]
where $s_1 = \min S$.
By the definition of $d$, $d\mid s_1$ and so $d\mid s^\ast -s_1$.
Then $\gcd(S^\ast \cup T) = \gcd (d, s^\ast-s_1) = d$.
Since $d^\ast = \gcd (d^+ , s^\ast -s_1)$, $d^\ast \mid d^+$.
Moreover, by B\'{e}zout's identity, there are some integers $a$ and $b$ such that $d^\ast = ad^+ + b (s^\ast-s_1)$.

Since $A$ is a walk-ensured Toeplitz matrix, there is a positive integer $M$ such that $P_i(A) = R_i(A)$ for every integer $i \ge M$.
Let
\[
M^\ast = d^+ (M+d^++1)
\]
and $\ell$ be a positive integer with $\ell \ge M^\ast$.
We will show that $P_\ell(A^\ast) \subseteq R_\ell(A^\ast)$.

To this end, take $p \in P_\ell(A^\ast)$.
Then $p \equiv \ell s_1 \pmod {d^\ast}$ and so there is an integer $i$ such that $\ell s_1-p = id^\ast$.
Thus $p \equiv \ell s_1 + id^\ast \pmod {d^+}$.
By the division algorithm, there is an integer $c$ satisfying
\begin{equation}\label{eq:c}
0 \le c \le d^+-1 \quad\text{and}\quad ib \equiv c \pmod{d^+}.
\end{equation}
Then
\begin{align}\label{eq:p}
p &\equiv \ell s_1 + id^\ast \pmod{d^+} \nonumber \\
&\equiv \ell s_1 + i(ad^+ + b(s^\ast-s_1)) \pmod{d^+} \nonumber \\
&\equiv (\ell - c) s_1 + cs^\ast \pmod{d^+}.
\end{align}

To show $p \in R_\ell(A^\ast)$, take any vertices $u$ and $v$ satisfying $v-u = p$.
For any $i \in \{0, \ldots, c-1\}$, let $v_i$ be the vertex in $[d]$ satisfying \[v_i \equiv u + is^\ast \pmod {d}.\]
Since $v_i \in [d]$ and $s^\ast \le n-d$, there exists a $s^\ast$-arc $(v_i, v_i +s^\ast)$
and we let
\begin{equation}\label{eq:ui}
u_{i+1} = v_i + s^\ast
\end{equation}
for any $i \in \{0, \ldots, c-1\}$.
In the following, we will take two steps to show that there is a directed $(u,v)$-walk of length $\ell$.
In the first step, we construct a directed $(u_j,v_j)$-walk of length at most $M+d^+$ for $j = 0, 1, \ldots, c-1$.
In the second step, we will construct a directed $(u_c, v_c)$-walk of length $\ell-\ell'$ where $\ell'$ is the length of
\[
u=u_0 \xrightarrow{W_0} v_0 \xrightarrow{s^\ast} u_1 \xrightarrow{W_1} v_1 \xrightarrow{s^\ast} \cdots \xrightarrow{s^\ast} u_c.
\]
Then there is a directed $(u,v)$-walk $W$ of length $\ell$:
\[
u= u_0 \xrightarrow{W_0} v_0 \xrightarrow{s^\ast} u_1 \xrightarrow{W_1} v_1 \xrightarrow{s^\ast} \cdots \xrightarrow{s^\ast} u_c \xrightarrow{W_c} v_c=v.
\]

{\bf Step 1.}
Fix $j \in \{0, \ldots, c-1\}$.
We note that $v_0 \equiv u_0 \pmod d$ and, for $j \ge 1$,
\begin{align*}
v_j - u_j &= v_j - (v_{j-1}+s^\ast) \tag{by \eqref{eq:ui}}\\
&\equiv (u+js^\ast) - (u+(j-1)s^\ast + s^\ast) \pmod{d} \\
&\equiv 0 \pmod{d}.
\end{align*}
Therefore $v_j - u_j = \ell_j d$ for some integer $\ell_j$.
Since $d = \gcd (d^+, s_1)$, by B\'{e}zout's identity, there are some integers $x$ and $y$ such that $d= xd^++ ys_1$, and so
\begin{align*}
v_j-u_j &= \ell_j (xd^+ + ys_1) \\
&\equiv \ell_jys_1 \pmod {d^+}.
\end{align*}
Therefore $v_j-u_j \in P_{\ell_jy}(A)$.
By the division algorithm, there exists an integer $\alpha_j$ satisfying
\begin{equation}\label{eq:alpha}
0< \alpha_j \le \frac{d^+}{d} \quad \text{and} \quad \ell_jy -M \equiv \alpha_j \pmod{d^+/d}.
\end{equation}
Then $P_{\ell_jy} (A) = P_{M+\alpha_j}(A)$ by Lemma~\ref{lem:abcd}(a) and so $v_j-u_j \in P_{M+\alpha_j}(A)$.
Since $P_{M+\alpha_j}(A) = R_{M+\alpha_j}(A)$, there is a directed $(u_j,v_j)$-walk $W_j$ of length $M+\alpha_j$.

{\bf Step 2.}
Since $W_0, \ldots, W_{c-1}$ are directed walks in $A$, we have
\begin{equation}\label{eq:length}
v_j - u_j \equiv (M+\alpha_j)s_1 \pmod {d^+}
\end{equation}
by Lemma~\ref{lem:cwcorr} and Proposition~\ref{lem:xiperiod}.
Then
\begin{align*}
v_c-u_c &= (v_c-u_0) - ((u_c-v_{c-1}) + (v_{c-1}-u_{c-1})+ \cdots + (v_1-u_1) + (u_1-v_0)+(v_0-u_0)) \\
&= (v-u) - \sum_{j=0}^{c-1}(u_{j+1}-v_j) - \sum_{j=0}^{c-1} (v_j-u_j) \tag{$\because$ $u_0=u$, $v_c=v$} \\
&= p - cs^\ast -\sum_{j=0}^{c-1} (v_j-u_j) \tag{$\because$ $v-u=p$, \eqref{eq:ui}} \\
&\equiv p-cs^\ast - \sum_{j=0}^{c-1}(M+\alpha_j)s_1 \pmod{d^+} \tag{by \eqref{eq:length}}\\
&\equiv (\ell-c)s_1 + cs^\ast - cs^\ast - \sum_{j=0}^{c-1}(M+\alpha_j)s_1 \pmod{d^+} \tag{by \eqref{eq:p}}\\
&\equiv (\ell-c - \sum_{j=0}^{c-1}(M+\alpha_j)) s_1 \pmod {d^+}
\end{align*}
and so $v_c-u_c \in P_{\ell-c-\ell^\ast}(A)$ where $\ell^\ast = \sum_{j=0}^{c-1} (M+\alpha_j)$.
Since $\ell \ge M^\ast=d^+(M+d^++1)$, we have
\begin{align*}
\ell-c-\ell^\ast &\ge d^+(M+d^++1) - c - \sum_{j=0}^{c-1}(M+\alpha_j) \\
&\ge d^+(M+d^++1) -c - \sum_{j=0}^{c-1}(M+d^+/d) \tag{by \eqref{eq:alpha}} \\
&= d^+(M+d^++1) -c(M+d^+/d+1) \\
&\ge d^+(M+d^++1)-(d^+-1)(M+d^+/d+1) \tag{by \eqref{eq:c}} \\
&\ge M+d^++1 \\
&\ge M.
\end{align*}
Therefore $P_{\ell-c-\ell^\ast}(A) = R_{\ell-c-\ell^\ast}(A)$ by the choice of $M$ and so $v_c-u_c \in R_{\ell-c-\ell^\ast}(A)$.
Thus there is a directed $(u_c,v_c)$-walk $W_c$ of length $\ell-c-\ell^\ast$.
Hence the existence of a directed $(u,v)$-walk $W$ of length $\ell$ is verified.
Since $u$ and $v$ were selected arbitrarily among the pairs of vertices with difference $p$, we have $p \in R_\ell(A^\ast)$.
Since $p$ was arbitrarily chosen, $P_\ell(A^\ast) \subseteq R_\ell(A^\ast)$.
Therefore $P_i(A^\ast) =R_i(A^\ast)$ for any integer $i \ge M^\ast$.
Hence $A^\ast$ is a walk-ensured Toeplitz matrix.

\section{Acknowledgement}
This work was partially supported by Science Research Center Program through the National Research Foundation of Korea(NRF) Grant funded by the Korean Government (MSIP)(NRF-2016R1A5A1008055). G.-S. Cheon was partially supported by the NRF-2019R1A2C1007518. Bumtle Kang was partially supported by the NRF-2021R1C1C2014187. S.-R. Kim and H. Ryu were partially supported by the Korea government (MSIP) (NRF-2017R1E1A1A03070489 and NRF-2022R1A2C1009648).

\end{document}